\documentclass[10pt,twoside,reqno]{amsart}
\usepackage{lmodern}
\usepackage[T1]{fontenc}
\usepackage{amsmath,amsfonts,amsthm,amssymb,amscd}
\usepackage{mathrsfs}
\usepackage{latexsym}
\usepackage{graphicx}
\usepackage{enumitem}

\usepackage{geometry}
\newtheorem{corollary}{Corollary}[section]
\newtheorem{lemma}{Lemma}[section]
\newtheorem{theorem}{Theorem}[section]

\usepackage{hyperref}  
\hypersetup{colorlinks=true, linktoc=all,
    linkcolor=blue,
		citecolor=blue,
		urlcolor=cyan,
		bookmarksopen=true
		}
\newcommand{\lnc}{\mathscr{L}}

\title[Short Weyl sums and Waring's problem with Almost Proportional Summands]
{Intermediate-Range Estimates for Short Weyl Sums and Waring's Problem with Almost Proportional Summands}
\author{Karimjon Ibrohimjonovich Mirzoabdughafurov}
\address{Tajik State University of Finance and Economics, Dushanbe, Tajikistan}
\email{karimjon2003@mail.ru}
\date{}
\subjclass[2020]{Primary 11P05; Secondary 11L15, 11P55}

\keywords{Short Weyl sums, intermediate-range estimates,
Waring's problem, almost proportional summands,
Hardy--Littlewood method}

\begin{document}

\begin{abstract}
We obtain a uniform pointwise estimate for short Weyl sums of degree
$n\geq3$ in the intermediate rational-approximation range
$$
\frac{1}{qx^{n-2}y}\ll|\lambda|
\ll\frac{1}{qy^{n-1}}.
$$
This range arises from a second application of Dirichlet's rational
approximation theorem in estimating the residual integral that occurs
in the derivation of an asymptotic formula for Waring's problem with
almost proportional summands. For
$r=2^n+1$ and fixed positive numbers
$\mu_1,\ldots,\mu_r$ satisfying
$$
\mu_1+\cdots+\mu_r=1,
$$
we derive an asymptotic formula for the number of representations
$$
x_1^n+\cdots+x_r^n=N,
\qquad
|x_i^n-\mu_iN|\leq H,\qquad 1\le i \le r,
$$
valid for
$$
N^{1-\theta(n,r)+\varepsilon}\le H \le \frac{N}{\ln N},
\quad
\theta(n,r)=
\frac{2}{n\bigl((r-1)(n-1)+2\bigr)}.
$$
The resulting admissible lower bound for $H$ improves the previously
known bound for every $n\geq3$.
\end{abstract}

\maketitle

\section{Introduction}

Let \(n\geq 3\), and let \(\mu_1,\ldots,\mu_r\) be fixed positive
numbers satisfying
\[
\mu_1+\cdots+\mu_r=1.
\]
We consider the number \(J_{n,r}(N,H)\) of solutions of the equation
\begin{equation}\label{Formula-x1n+...+xrn=N}
x_1^n+\cdots+x_r^n=N
\end{equation}
in natural numbers \(x_1,\ldots,x_r\), subject to
\[
|x_i^n-\mu_iN|\leq H,
\qquad 1\leq i\leq r.
\]
Thus, the \(n\)-th power of each variable is restricted to a short
interval centred at \(\mu_iN\).
 In the special case  $\mu_1=\cdots=\mu_r=\frac{1}{r},$
this problem reduces to Waring's problem with almost equal summands.

Waring's problem with proportionality conditions was initiated by
E.~M.~Wright~\cite{Wright-1933,Wright-1934}. For
$r\geq r(n)=(n-2)2^{n-1}+5$,
he obtained an asymptotic formula under the condition
$H=N^{1-\theta_w(n,r)}$,
where the exponent \(\theta_w(n,r)\) is given explicitly in his theorem.
For \(n=3,4,5\), the corresponding pairs \((r,\theta_w(n,r))\) are
\[
\left(9,\frac{1}{51}\right),\qquad
\left(21,\frac{1}{100}\right),\qquad
\left(53,\frac{1}{325}\right).
\]
Wright's work provided the starting point for subsequent studies of
localized versions of Waring's problem, including the case of almost
equal summands.

Daemen subsequently studied localized solutions in Waring's problem
with almost equal summands. Using Vinogradov's mean value theorem and
the binomial descent procedure, he obtained an asymptotic formula for
\(r\geq r_n\), where
\[
r_3=19,\qquad r_4=49,\qquad r_5=113,\qquad r_6=243.
\]
His result concerns variables satisfying
\[
X-Y\leq x_j\leq X+Y,
\qquad
X=\left\lfloor\left(\frac{N}{r}\right)^{1/n}\right\rfloor,
\qquad
Y=X^{1/2}Y_n,
\]
where \(Y_n=(\ln X)^{r_n-1}\). Thus, Daemen's asymptotic formula
reaches the square-root scale, up to a logarithmic factor, but requires
a substantially larger number of summands: \(19\) for cubes and \(49\)
for fourth powers. In a related paper, he established a lower bound at
the square-root scale \(Y=cX^{1/2}\)
\cite{Daemen-2010-1,Daemen-2010-2}. He also considered the special case
of thirteen almost equal cubes by an elementary method
\cite{Daemen-2010-3}.

The modern theory of Vinogradov's mean value theorem was developed by Bourgain, Demeter and Guth for degrees greater 
than three and by Wooley for arbitrary degrees~\cite{BDG-2016,Wooley-2019}. Their results yield sharp mean-value estimates 
for polynomial Weyl sums and have important consequences for the classical Waring problem. In a related direction, 
Brüdern and Wooley obtained mean-value estimates for smooth Weyl sums taken over a long range, with the 
integration restricted to sets of minor arcs of intermediate and large height, and applied these estimates 
to Waring's problem for larger powers~\cite{Brudern-Wooley-2023}. 

More recently, Waring's problem with almost proportional summands was
studied in \cite{Rakh-2023,Rakh-2024}. 
A central analytic difficulty in problems of this type is the estimation of short Weyl sums.

We use the notation
\[
T(\alpha;x,y)
=
\sum_{x-y<m\leq x}e(\alpha m^n),
\qquad
e(z)=e^{2\pi iz}.
\]

In the classical Waring problem, the corresponding Weyl sums have
length comparable to \(x\). By contrast, in the problem with almost
proportional summands considered here, the variables are restricted
to short intervals, so that \(y\) may be considerably smaller than
\(x\). Consequently, the application of the Hardy--Littlewood circle
method requires estimates for \(T(\alpha;x,y)\) that are uniform both
in \(y\) and in the rational approximation of \(\alpha\).

Vaughan~\cite{Vaughan-1981} obtained approximations for ordinary Weyl
sums by combining van der Corput's method with estimates for complete
exponential sums. An estimate for short cubic Weyl sums was first
established in \cite{Rakh-Mirzo-2008-1} and was applied to Waring's
problem for cubes with almost equal summands in
\cite{Rakh-Mirzo-2008-2}. A fourth-degree analogue was obtained in
\cite{Rakh-Mirzo-2010} and applied to Waring's problem for fourth
powers with almost equal summands in \cite{Rakh-2011}. Higher-degree
extensions and their applications were developed in
\cite{Rakh-2015-1,Rakh-2015-2}. These estimates were subsequently used
in the study of Waring's problem with almost proportional summands in
\cite{Rakh-2023,Rakh-2024}. 

A treatment of the intermediate range for short quartic Weyl sums was
previously developed in \cite{Mirzo-2026} and applied there to
Waring's problem for seventeen fourth powers with almost equal
summands. The present paper extends this approach to short Weyl sums
of arbitrary degree \(n\geq3\) and applies the resulting estimate to
Waring's problem with almost proportional summands.

To describe the range covered by these results, write
\[
\alpha=\frac{a}{q}+\lambda,
\qquad
(a,q)=1,
\qquad
1\leq q\leq\tau,
\qquad
|\lambda|\leq\frac{1}{q\tau},
\]
and put
\[
S(a,q)=
\sum_{m=1}^{q}
e\left(\frac{am^n}{q}\right).
\]
We also use the notation
\[
\mathfrak S(N)
=
\sum_{q=1}^{\infty}
\sum_{\substack{0\leq a<q\\(a,q)=1}}
\frac{S^r(a,q)}{q^r}
e\left(-\frac{aN}{q}\right).
\]

It was shown in
\cite{Rakh-2015-1} that, under the
condition
\[
\tau\geq 2n(n-1)x^{n-2}y,
\]
the central part of a major arc, defined by
\[
|\lambda|\leq\frac{1}{2nqx^{n-1}},
\]
admits the approximation
\[
T(\alpha;x,y)
=
\frac{y}{q}S(a,q)\gamma(\lambda;x,y)
+
O\left(q^{\frac{1}{2}+\varepsilon}\right),
\]
where
\[
\gamma(\lambda;x,y)
=
\int_{-\frac{1}{2}}^{\frac{1}{2}}
e\left(
\lambda
\left(x-\frac{y}{2}+yt\right)^n
\right)\,dt.
\]
In the remaining part of the major arc,
\[
\frac{1}{2nqx^{n-1}}
<
|\lambda|
\leq
\frac{1}{q\tau},
\]
one has
\[
|T(\alpha;x,y)|
\ll
q^{1-\frac{1}{n}}\ln q
+
\min\left(
yq^{-\frac{1}{n}},
x^{\frac{1}{2}}q^{\frac{1}{2}-\frac{1}{n}}
\right).
\]

Since
\[
\tau\geq 2n(n-1)x^{n-2}y,
\]
the upper endpoint of this range satisfies
\[
\frac{1}{q\tau}
\ll
\frac{1}{qx^{n-2}y}.
\]
Thus, the estimates quoted above concern the first rational
approximation used in the initial major-arc decomposition and reach
only the range
\[
|\lambda|
\ll
\frac{1}{qx^{n-2}y}.
\]

A different range arises in the treatment of the residual integral.
On the residual set, Dirichlet's approximation theorem is applied a
second time, giving
\[
\alpha=\frac{a_1}{q_1}+\beta,
\qquad
(a_1,q_1)=1,
\qquad
1\leq q_1\leq y^{n-1},
\qquad
|\beta|\leq\frac{1}{q_1y^{n-1}}.
\]
For the part corresponding to small values of \(q_1\), the definition
of the residual set implies
\[
|\beta|
\gg
\frac{1}{q_1x^{n-2}y}.
\]
Thus, the second rational approximation leads to the intermediate range
\[
\frac{1}{q_1x^{n-2}y}
\ll
|\beta|
\ll
\frac{1}{q_1y^{n-1}}.
\]
After relabelling \(q_1\) and \(\beta\) as \(q\) and \(\lambda\),
respectively, this becomes
\[
\frac{1}{qx^{n-2}y}
\ll
|\lambda|
\ll
\frac{1}{qy^{n-1}}.
\]
This intermediate range is not covered by the estimates quoted above.
Theorem~\ref{ShortExposumIR} provides a uniform estimate for the short
Weyl sum \(T(\alpha;x,y)\) throughout this range.

We now turn to Waring's problem with almost proportional summands and
recall the previously known results.

In \cite{Rakh-2023}, the case \(n=3\) and \(r=9\) was treated under
the condition
\[
H\geq N^{1-\frac{1}{30}+\varepsilon}.
\]
More generally, it was proved in \cite{Rakh-2024} that, for
\[
r=2^n+1,
\]
an asymptotic formula holds provided
\[
H\geq N^{1-\theta_0(n,r)+\varepsilon},
\qquad
\theta_0(n,r)
=
\frac{2}{(r+1)(n^2-n)}
=
\frac{2}{n(r+1)(n-1)}.
\]

The decisive new ingredient of the present paper is the uniform
estimate for short Weyl sums in the intermediate range established
in Theorem~\ref{ShortExposumIR}. When combined with a second
application of Dirichlet's rational approximation theorem, this
estimate yields a sharper treatment of the residual integral
\(J(\mathfrak m)\), which determines the admissible lower bound for
\(H\). By contrast, the major-arc analysis and the estimates for
\(J(\mathfrak M_1)\) and \(J(\mathfrak M_2)\) obtained in
\cite{Rakh-2024} remain valid under the present weaker restriction
on \(H\) and require no modification.

More precisely, Dirichlet's rational approximation theorem is applied
a second time for \(\alpha\in\mathfrak m\). According to the size of
the new denominator, we divide
\[
\mathfrak m=\mathfrak m_1\cup\mathfrak m_2.
\]
On \(\mathfrak m_1\), corresponding to large denominators, a
Weyl-type estimate is used. On \(\mathfrak m_2\), corresponding to
small denominators, we apply the new intermediate-range estimate of
Theorem~\ref{ShortExposumIR}. It is precisely this estimate on
\(\mathfrak m_2\) that provides the additional saving in the
residual integral and leads to the improved exponent in the
admissible range of \(H\). Thus, the improvement results from a
sharper analysis of the transition region rather than from any
modification of the major-arc main term.
This argument replaces \(\theta_0(n,r)\) by
\[
\theta(n,r)
=
\frac{2}{n\bigl((r-1)(n-1)+2\bigr)}.
\]
Since \(\theta(n,r)>\theta_0(n,r)\) for every \(n\geq3\), the
asymptotic formula remains valid for shorter intervals:
\[
H\geq N^{1-\theta_0(n,r)+\varepsilon}
\]
is replaced by
\[
H\geq N^{1-\theta(n,r)+\varepsilon}.
\]
For the first few values of \(n\), the improvement in the
corresponding exponent is illustrated by
\[
\begin{aligned}
\theta_0(3,9)&=\frac{1}{30}
&\quad\longrightarrow\quad
\theta(3,9)&=\frac{1}{27},\\
\theta_0(4,17)&=\frac{1}{108}
&\quad\longrightarrow\quad
\theta(4,17)&=\frac{1}{100},\\
\theta_0(5,33)&=\frac{1}{340}
&\quad\longrightarrow\quad
\theta(5,33)&=\frac{1}{325}.
\end{aligned}
\]

The contribution of the present paper is therefore twofold. First, we
establish a uniform estimate for short Weyl sums in the intermediate
range arising from the second rational approximation and not covered
by the previously available major-arc estimates. Second, by
incorporating this estimate into the treatment of the residual
integral, we improve the admissible lower bound for \(H\) in Waring's
problem with almost proportional summands for every fixed \(n\geq3\).

\begin{theorem}[Intermediate-range estimate for short Weyl sums]
\label{ShortExposumIR}
Let \(n\geq 3\) be fixed, let
\[
x\geq x_0>0,
\qquad
0<y\leq 0.01x,
\]
and suppose that
\[
\alpha=\frac{a}{q}+\lambda,
\qquad
a\in\mathbb Z,
\qquad
q\in\mathbb N,
\qquad
(a,q)=1.
\]
If
\[
\frac{1}{qx^{n-2}y}
\ll
|\lambda|
\ll
\frac{1}{qy^{n-1}},
\]
then
\[
\left|
T\left(\frac{a}{q}+\lambda;x,y\right)
\right|
\ll
q^{1-\frac{1}{n}}\ln q
+
q^{1-\frac{1}{n}}
\ln\left(2+|\lambda|x^{n-1}\right)
+
q^{\frac{1}{2}-\frac{1}{n}}
x^{\frac{n-2}{2}}
y^{\frac{3-n}{2}}.
\]
The implied constant depends at most on \(n\) and on the constants
implicit in the conditions on \(\lambda\).
\end{theorem}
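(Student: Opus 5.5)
We follow the standard route for major-arc Weyl sum approximations (as in Vaughan's treatment and the earlier short-sum estimates quoted above): split by residue classes modulo $q$ and apply van der Corput's method, in the form of the Poisson summation formula / truncated Poisson lemma, to the resulting sums over arithmetic progressions. Writing $m=qk+s$ with $1\le s\le q$, we have
\[
T\Bigl(\frac aq+\lambda;x,y\Bigr)=\sum_{s=1}^{q}e\Bigl(\frac{as^n}{q}\Bigr)\sum_{\frac{x-y-s}{q}<k\le\frac{x-s}{q}}e\bigl(\lambda(qk+s)^n\bigr).
\]
The phase $f(u)=\lambda u^n$ on $(x-y,x]$ has derivative $f'(u)=n\lambda u^{n-1}\asymp|\lambda|x^{n-1}$, which is monotone. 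In the intermediate range we have $|\lambda|x^{n-1}\gg x/(qy)$, so $f'$ is no longer small and the first-term approximation $\frac yq S(a,q)\gamma$ is not available. Instead we apply the Poisson summation formula to each inner sum in the form
\[
\sum_{u\equiv s\,(q)}e(f(u))=\frac1q\sum_{h}e\Bigl(\frac{hs}{q}\Bigr)\int e\Bigl(f(u)-\frac{hu}{q}\Bigr)du+O(\text{error}),
\]
with $h$ running over an interval of length $\ll q|\lambda|x^{n-1}y/x+1$. This interval is the range of $qf'$ over the short interval, widened by $O(1)$. Interchanging the sums produces the complete sums
\[
S(a,h;q)=\sum_{s}e\bigl((as^n+hs)/q\bigr).
\]

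The key steps run as follows. First, we bound each complete sum by $|S(a,h;q)|\ll q^{1-1/n}$ uniformly in $h$ (Hua/Chen). Where sharper information is needed we use the Weil-type bound $q^{1/2+\varepsilon}(q,h)^{1/2}$. Second, each oscillatory integral $I_h=\int_{x-y}^{x}e(\lambda u^n-hu/q)\,du$ is estimated by the second-derivative test together with the first-derivative test. Since $f''(u)\asymp|\lambda|x^{n-2}$, a stationary point gives $|I_h|\ll(|\lambda|x^{n-2})^{-1/2}$. When $h/q$ lies outside $f'([x-y,x])$, the bound is instead $\ll 1/\|f'(u)-h/q\|$ at the nearest endpoint. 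Third, we sum over $h$:
\begin{itemize}
\item The terms with $h/q$ far from the range of $f'$ give a harmonic-type sum $\frac1q\sum_h q^{1-1/n}\cdot\frac{q}{|h-h_0|}$. This yields the terms $q^{1-1/n}\ln q+q^{1-1/n}\ln(2+|\lambda|x^{n-1})$, since the number of relevant dyadic scales of $h$ is $\ll\ln(2+q|\lambda|x^{n-1})$.
\item The $\ll q|\lambda|x^{n-2}y+1$ values of $h$ with stationary points contribute
\[
\ll\frac1q\,q^{1-1/n}\,\bigl(q|\lambda|x^{n-2}y+1\bigr)\,(|\lambda|x^{n-2})^{-1/2}.
\]
\end{itemize}
Using the upper bound $|\lambda|\ll 1/(qy^{n-1})$, the first part of the stationary-point contribution is
\[
\ll q^{1-1/n}|\lambda|^{1/2}x^{\frac{n-2}{2}}y\ll q^{\frac12-\frac1n}x^{\frac{n-2}{2}}y^{\frac{3-n}{2}}.
\]
The "$+1$" part is $q^{-1/n}(|\lambda|x^{n-2})^{-1/2}$. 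By the lower bound $|\lambda|\gg 1/(qx^{n-2}y)$, this is $\ll q^{1/2-1/n}y^{1/2}$, which is dominated by the third term because $x^{n-2}y^{3-n}\ge y$.

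The error terms in the truncated Poisson formula (van der Corput's lemma, of size $O(\ln)$ per residue class) need care, since $q$ residue classes times $O(1)$ would give $q$, which is worse than $q^{1-1/n}$. To avoid this we do not estimate the error class by class. Instead we apply Poisson summation to the full sum in the form
\[
T=\sum_{h}\frac{S(a,h;q)}{q}I_h,
\]
treating the boundary terms via the endpoint contributions of the first-derivative test. These boundary terms again carry a factor $S(a,h;q)/q$, and this factor is what converts them into the logarithmic terms.

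I expect this handling of the truncation and boundary errors to be the main obstacle: securing the factor $q^{-1/n}$ on every non-stationary contribution, uniformly in $y$. The first thing I would try is a smooth partition of unity in $h$ around $h_0=qf'(x)$, combined with partial summation in $h$ using the uniform complete-sum bound.
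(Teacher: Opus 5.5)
Your bound $|S(a,h;q)|\ll q^{1-1/n}$, the count $\ll 1+q|\lambda|x^{n-2}y$ of stationary frequencies with the second-derivative bound, and the final use of the two bounds on $\lambda$ all match the paper. There is a genuine gap, however: the step you flag as the main obstacle is left open, and it is the one point where care is needed.

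With your decomposition into progressions $u\equiv s\pmod q$, Poisson summation on each progression produces only frequencies $h$ within $O(1)$ of $\Delta=q f'([x-y,x])$. As you note, this is an interval of length $\ll 1+q|\lambda|x^{n-2}y$, so there is no harmonic sum over distant $h$. The entire loss is the truncation error $O(\ln(2+q|\lambda|x^{n-1}))$ of each progression. That error carries only the unimodular factor $e(as^n/q)$, so it totals $O(q\ln(\cdot))$. For large $q$ this is not dominated by the right-hand side of the theorem. In the application, where $q_1$ goes up to $Q\asymp H_\nu/\ln N$, it is no better than the trivial bound.

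Your proposed remedy, writing $T=\sum_h q^{-1}S(a,h;q)I_h$ with boundary terms that \emph{carry} $S(a,h;q)/q$, is asserted rather than derived. Smoothing in $h$ does not help by itself, because the error comes from the sharp cutoff of the $k$-sum. Your harmonic-sum bullet also does not fit your own setup. In untruncated form, $\sum_h q^{-1/n}q/|h-h_0|$ diverges, so the logarithms cannot be read off from a count of \emph{dyadic scales}.

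The missing idea is to reverse the order of the two operations. First expand the arithmetic factor in additive characters,
\[
e\Bigl(\frac{ak^n}{q}\Bigr)=\frac1q\sum_{b=1}^{q}S_b(a,q)\,e\Bigl(-\frac{bk}{q}\Bigr),\qquad S_b(a,q)=\sum_{v=1}^{q}e\Bigl(\frac{av^n+bv}{q}\Bigr).
\]
Then $T=q^{-1}\sum_b S_b(a,q)T_b$ with $T_b=\sum_{x-y<k\le x}e(\lambda k^n-bk/q)$. This is a sum over the \emph{whole} interval whose phase has monotone derivative of size $\ll1+|\lambda|x^{n-1}$.

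Applying Lemma~\ref{Lemma formula summirov Puassona} to each $T_b$ leaves an error $O(\ln(2+|\lambda|x^{n-1}))$, now multiplied by $|S_b(a,q)|/q\ll q^{-1/n}$. Summing over the $q$ values of $b$ gives exactly $q^{1-1/n}\ln(2+|\lambda|x^{n-1})$.

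The main terms combine via $m=b+qh$ and $S_b=S_m$ into $q^{-1/n}\sum_m|I_m|$. Because each $h$-range is widened by $1$, the integers $m$ lie within distance $q$ of $\Delta$. The non-stationary ones have $\rho=[\operatorname{dist}(m,\Delta)]\in[1,q]$, with at most two for each $\rho$. The bound $|I_m|\ll q/\rho$ then gives $\sum|I_m|\ll q\ln(2q)$, which produces the $q^{1-1/n}\ln q$ term. No smoothing or partial summation in $h$ is needed.
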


The intermediate-range estimate above leads to the following
application to Waring's problem with almost proportional summands.

\begin{theorem}[Waring's problem with almost proportional summands]
\label{TeorAsForWaringPPSl}
Let \(N\) be a sufficiently large natural number, let \(n\geq 3\) be
fixed, and put
\[
r=2^n+1,
\qquad
\theta(n,r)
=
\frac{2}{n\bigl((r-1)(n-1)+2\bigr)}.
\]
Let \(\varepsilon>0\) be sufficiently small and
\(\mu_1,\ldots,\mu_r\) be fixed positive numbers satisfying
\[
\mu_1+\cdots+\mu_r=1.
\]
Denote by \(J_{n,r}(N,H)\) the number of solutions of
\eqref{Formula-x1n+...+xrn=N} in natural numbers
\(x_1,\ldots,x_r\), subject to
\begin{equation}\label{formula |xin-N/n|<=H}
|x_i^n-\mu_iN|\leq H,
\qquad
1\leq i\leq r.
\end{equation}
Suppose that $N^{1-\theta(n,r)+\varepsilon}\le H \le \frac{N}{\lnc}.$
Then
\begin{equation}\label{asymptotic-almost-proportional}
\begin{aligned}
J_{n,r}(N,H)
={}&
\frac{2^r\gamma(n,r)}{n^r}
\prod_{i=1}^{r}\mu_i^{-1+\frac{1}{n}}
\mathfrak{S}(N)
\frac{H^{r-1}}{N^{r-\frac{r}{n}}}+
O\left(
\frac{H^{r-1}}
{N^{r-\frac{r}{n}}\lnc^{r-1}}
\right),
\end{aligned}
\end{equation}
where $\lnc=\ln N,$
the singular series \(\mathfrak S(N)\) defined above is bounded below
by a positive constant, and 
\[
\gamma(n,r)
=
\frac{1}{2^r(r-1)!}
\sum_{j=0}^{\frac{r-1}{2}}
(-1)^j
\binom{r}{j}
(r-2j)^{r-1}.
\]
The implied constant in \eqref{asymptotic-almost-proportional} may
depend on \(n\), \(\varepsilon\), and
\(\mu_1,\ldots,\mu_r\).
\end{theorem}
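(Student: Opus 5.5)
The plan follows the circle-method scheme of \cite{Rakh-2024}. The only change is in the treatment of the residual integral, where Theorem~\ref{ShortExposumIR} is brought in.

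\textbf{Setup.} The condition \eqref{formula |xin-N/n|<=H} confines each $x_i$ to an interval $(x_i-y_i,x_i]$ with
\[
x_i\asymp(\mu_iN)^{1/n},
\qquad
y_i\asymp H(\mu_iN)^{-1+1/n}.
\]
Up to boundary terms this gives
\[
J_{n,r}(N,H)=\int_{-1/\tau}^{1-1/\tau}\prod_{i=1}^{r}T(\alpha;x_i,y_i)\,e(-\alpha N)\,d\alpha .
\]
We take $\tau$ with $\tau\ge 2n(n-1)x^{n-2}y$ and split the unit interval into $\mathfrak M_1\cup\mathfrak M_2\cup\mathfrak m$, exactly as in \cite{Rakh-2024}. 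On $\mathfrak M_1$ the quoted approximation $T=\frac{y}{q}S(a,q)\gamma(\lambda;x,y)+O(q^{1/2+\varepsilon})$ applies. On $\mathfrak M_2$ the bound $q^{1-1/n}\ln q+\min(yq^{-1/n},x^{1/2}q^{1/2-1/n})$ applies. As stated in the introduction, these arcs produce the main term of \eqref{asymptotic-almost-proportional}. This includes the singular series $\mathfrak S(N)$, its positive lower bound, the singular integral giving $\gamma(n,r)\prod\mu_i^{-1+1/n}H^{r-1}N^{-r+r/n}$, and the error $O(H^{r-1}N^{-r+r/n}\lnc^{-r+1})$. I would only check that each restriction on $H$ used in \cite{Rakh-2024} for $J(\mathfrak M_1)$ and $J(\mathfrak M_2)$ is implied by $H\ge N^{1-\theta(n,r)+\varepsilon}$. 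This holds because $\theta>\theta_0$ only weakens the requirement in the direction those estimates need.

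\textbf{Residual integral.} Isolate one factor and apply Hölder to the remaining $r-1=2^n$ factors:
\[
|J(\mathfrak m)|\le\sup_{\alpha\in\mathfrak m}|T(\alpha;x_1,y_1)|\prod_{i=2}^{r}\Bigl(\int_0^1|T(\alpha;x_i,y_i)|^{2^n}d\alpha\Bigr)^{2^{-n}}.
\]
For the integrals I would use the short-interval Hua-type mean value from \cite{Rakh-2015-1,Rakh-2024}, in the form that bounds the mean value essentially by $y^{2^n}/(x^{n-1}y)$ up to $N^{\varepsilon}$. It therefore remains to show $\sup_{\mathfrak m}|T_1|\ll y_1N^{-\varepsilon/2}$, with a saving large enough to beat the main term by a power of $\lnc$.

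For $\alpha\in\mathfrak m$, apply Dirichlet a second time as described: $\alpha=a_1/q_1+\beta$ with $q_1\le y^{n-1}$ and $|\beta|\le 1/(q_1y^{n-1})$. Fix a parameter $Q$ and put $\mathfrak m_1=\{q_1>Q\}$ and $\mathfrak m_2=\{q_1\le Q\}$.
\begin{itemize}
\item On $\mathfrak m_1$, Weyl's inequality for the short sum (differencing $n-1$ times in $m$ over the interval of length $y$) gives
\[
|T|\ll y^{1+\varepsilon}\bigl(q_1^{-1}+y^{-1}+q_1y^{-n}\bigr)^{2^{1-n}}\ll y^{1+\varepsilon}Q^{-2^{1-n}} .
\]
\item On $\mathfrak m_2$, the definition of $\mathfrak m$ (being outside $\mathfrak M_1\cup\mathfrak M_2$) forces $|\beta|\gg 1/(q_1x^{n-2}y)$, so Theorem~\ref{ShortExposumIR} applies and gives
\[
|T|\ll Q^{1-1/n}\lnc+Q^{1/2-1/n}x^{\frac{n-2}{2}}y^{\frac{3-n}{2}} .
\]
\end{itemize}
If some $q_1\le Q$ were small enough that $\alpha$ lies in the original major arcs, that case is excluded by construction of $\mathfrak m$.

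\textbf{Optimization and main obstacle.} The crux is choosing $Q$ so that both bounds are $\ll y^{1-\delta}$. We need $Q^{1/2-1/n}x^{(n-2)/2}y^{(3-n)/2}\le y^{1-\delta}$, that is, $Q^{1/2-1/n}\le y^{(n-1)/2-\delta}x^{-(n-2)/2}$, while the $\mathfrak m_1$ bound requires $Q$ to be a positive power of $N$. Substituting $y\asymp HN^{-1+1/n}$ and $x\asymp N^{1/n}$ turns both constraints into inequalities on $\log H/\log N$. The total saving then has to pay for the loss incurred by the $2^n$-th moment relative to the main term $y^{r}/(x^{n-1}y)$. I expect that solving the resulting linear system yields exactly $H\ge N^{1-\theta(n,r)+\varepsilon}$ with $\theta=\frac{2}{n((r-1)(n-1)+2)}$. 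This bookkeeping is the step I regard as the main obstacle: verifying that the exponents match, and that the term $Q^{1-1/n}\ln(2+|\beta|x^{n-1})$ stays harmless because $|\beta|x^{n-1}\le x^{n-1}$ costs only a factor $\lnc$. If the Hua-type mean value available is weaker than assumed, I would instead spend part of the pointwise saving, taking $Q$ slightly smaller, to absorb the loss. Combining $J(\mathfrak M_1)$, $J(\mathfrak M_2)$ and $J(\mathfrak m)=o\bigl(H^{r-1}N^{-r+r/n}\lnc^{-r+1}\bigr)$ then gives \eqref{asymptotic-almost-proportional}.
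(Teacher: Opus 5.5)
\textbf{The gap is in the one step that produces the exponent, and that step starts from a wrong mean-value input.} Your architecture matches the paper's: isolate one factor, apply H\"older to the other $2^n$, re-approximate $\alpha\in\mathfrak m$ with $P=y^{n-1}$, use Weyl for $q_1>Q$ and Theorem~\ref{ShortExposumIR} for $q_1\le Q$. The short-interval Hua bound available from \cite{Rakh-2024} (the same one the paper uses for $R_{n,r}$) is
\[
\int_0^1|T(\alpha;x,y)|^{2^n}\,d\alpha\ll y^{2^n-n+\varepsilon},
\]
not $y^{2^n}/(x^{n-1}y)$ up to $N^{\varepsilon}$. The two differ by a factor $(x/y)^{n-1}\asymp(N/H)^{n-1}$, and your version is not what those references (or Weyl differencing in the short variable) provide. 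Since the main term is $\asymp y^{r-1}/x^{n-1}$, what you must actually show is
\[
\sup_{\alpha\in\mathfrak m}|T(\alpha;x,y)|\ll y\Bigl(\frac{y}{x}\Bigr)^{n-1}N^{-\varepsilon'}\lnc^{-(r-1)},
\]
a power saving of size $(N/H)^{n-1}$, not merely $yN^{-\varepsilon/2}$. This is exactly where $\theta(n,r)$ comes from. On $\mathfrak m_1$, Weyl saves $y^{-\delta}$ with $\delta=2^{1-n}=2/(r-1)$. With $x\asymp N^{1/n}$ and $y\asymp HN^{-1+1/n}$, the condition $y^{\delta}\ge(x/y)^{n-1}N^{\varepsilon'}$ is equivalent to $H\ge N^{1-\theta+\varepsilon}$ with $\theta=\delta/(n(n-1+\delta))$, which is the stated $\theta(n,r)$. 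With your mean value the minor-arc constraint would be far weaker and would not produce $\theta(n,r)$. So the bookkeeping you defer as ``the main obstacle'' is not just unverified but set up incorrectly.

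\textbf{You also misplace the bottleneck.} $Q$ is not a free parameter to be balanced between $\mathfrak m_1$ and $\mathfrak m_2$. Theorem~\ref{ShortExposumIR} needs the lower bound $|\beta|>1/(q_1\tau)$, and this follows from $\alpha\notin\mathfrak M$ only when $q_1$ is at most the major-arc cutoff $H_r/\lnc$. The paper splits at exactly that cutoff. This is also essentially optimal for $\mathfrak m_1$, because the term $1/y$ in Weyl's bound caps the saving at $y^{-\delta}$ anyway. Thus $\mathfrak m_1$ alone determines $\theta$. Theorem~\ref{ShortExposumIR} on $\mathfrak m_2$, with $q_1\ll H_\nu/\lnc$, only has to be non-binding, which amounts to $\theta\le v_1(n)=1/(n(n^2-n+1))$ and $\theta\le v_2(n)=2/(n(3n^2-4n+2))$. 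Your fallback of taking $Q$ smaller would weaken precisely the binding estimate.

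\textbf{Your justification for $J(\mathfrak M_1)$ and $J(\mathfrak M_2)$ is backwards.} Since $\theta>\theta_0$, the hypothesis on $H$ becomes \emph{weaker}, so the estimates of \cite{Rakh-2024}, proved under the stronger hypothesis, do not transfer automatically. You must check that their true thresholds satisfy $\theta\le\delta_1=1/(n^3-n^2+n)$ and $\theta\le\delta_2=1/(n^3-\tfrac12n^2+n)$, i.e.\ $2n\le2^n$ and $2n^2-n\le2^n(n-1)$. For the boundary term $R_{n,r}$ you must also check $1/n-n\theta>0$.
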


The first values of the parameters occurring in
Theorem~\ref{TeorAsForWaringPPSl} are recorded in
Table~\ref{table-theta-values}.

\begin{table}[ht]
\centering
\caption{The values of \(r=2^n+1\) and \(\theta(n,r)\)}
\label{table-theta-values}
\vspace{-8pt}
\begin{tabular}{|c|c|c|c|c|c|c|c|c|}
\hline
\(n\)
& \(3\) & \(4\) & \(5\) & \(6\)
& \(7\) & \(8\) & \(9\) & \(10\)
\\
\hline
\(r=2^n+1\)
& \(9\) & \(17\) & \(33\) & \(65\)
& \(129\) & \(257\) & \(513\) & \(1025\)
\\
\hline
\(\theta(n,r)\)
& \(\frac{1}{27}\)
& \(\frac{1}{100}\)
& \(\frac{1}{325}\)
& \(\frac{1}{966}\)
& \(\frac{1}{2695}\)
& \(\frac{1}{7176}\)
& \(\frac{1}{18441}\)
& \(\frac{1}{46090}\)
\\
\hline
\end{tabular}
\end{table}

Taking
\[
\mu_1=\cdots=\mu_r=\frac{1}{r}
\]
in Theorem~\ref{TeorAsForWaringPPSl}, we obtain the corresponding
result for almost equal summands.

\begin{corollary}\label{TeorAsForWaringAES}
Let \(N\) be a sufficiently large natural number, let \(n\geq 3\) be
fixed, and put
\[
r=2^n+1,
\qquad
\theta(n,r)
=
\frac{2}{n\bigl((r-1)(n-1)+2\bigr)}.
\]
Let \(\varepsilon>0\) be sufficiently small. Denote by \(J_{n,r}(N,H)\) the number of solutions of
\eqref{Formula-x1n+...+xrn=N} in natural numbers
\(x_1,\ldots,x_r\), subject to
\[
\left|x_i^n-\frac{N}{r}\right|\leq H,
\qquad
1\leq i\leq r.
\]
If
\[
N^{1-\theta(n,r)+\varepsilon}\le H \le \frac{N}{\lnc},
\]
then
\begin{equation*}
\begin{aligned}
J_{n,r}(N,H)
={}&
\frac{
2^r r^{r-\frac{r}{n}}\gamma(n,r)
}{n^r}
\mathfrak{S}(N)
\frac{H^{r-1}}{N^{r-\frac{r}{n}}}+
O\left(
\frac{H^{r-1}}
{N^{r-\frac{r}{n}}\lnc^{r-1}}
\right).
\end{aligned}
\end{equation*}
\end{corollary}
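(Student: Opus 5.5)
The plan is to derive the corollary by specializing Theorem~\ref{TeorAsForWaringPPSl}, which we take as given.

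\textbf{Choice of parameters.} We put
\[
\mu_1=\cdots=\mu_r=\frac{1}{r}.
\]
These are fixed positive numbers with \(\mu_1+\cdots+\mu_r=r\cdot\frac1r=1\), so Theorem~\ref{TeorAsForWaringPPSl} applies. With this choice:
\begin{itemize}
\item the localization condition \eqref{formula |xin-N/n|<=H} becomes exactly \(|x_i^n-N/r|\le H\) for \(1\le i\le r\);
\item \(J_{n,r}(N,H)\) in the corollary is the same counting function as in the theorem;
\item the parameters \(r=2^n+1\) and \(\theta(n,r)\) are the same, and so is the admissible range \(N^{1-\theta(n,r)+\varepsilon}\le H\le N/\lnc\).
\end{itemize}
Hence the asymptotic formula \eqref{asymptotic-almost-proportional} holds verbatim.

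\textbf{Simplifying the constant.} The only computation is the product of the \(\mu_i\):
\[
\prod_{i=1}^{r}\mu_i^{-1+\frac1n}
=\Bigl(\frac1r\Bigr)^{r\left(-1+\frac1n\right)}
=r^{\,r-\frac rn}.
\]
Substituting this into the main term of \eqref{asymptotic-almost-proportional} gives
\[
\frac{2^r r^{r-\frac rn}\gamma(n,r)}{n^r}\,\mathfrak S(N)\,\frac{H^{r-1}}{N^{r-\frac rn}}.
\]

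\textbf{Remaining terms.} The error term \(O\bigl(H^{r-1}N^{-r+\frac rn}\lnc^{-(r-1)}\bigr)\) is unchanged. Its implied constant depended on \(\mu_1,\ldots,\mu_r\), which are now functions of \(n\) alone, so it depends only on \(n\) and \(\varepsilon\). The positivity lower bound for \(\mathfrak S(N)\) carries over unchanged.

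No step presents a real obstacle. The only point to check is the exponent arithmetic in the product, which is a one-line calculation.
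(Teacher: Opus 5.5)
Your proposal is correct and follows the paper's own route: the corollary is obtained by setting \(\mu_1=\cdots=\mu_r=\frac1r\) in Theorem~\ref{TeorAsForWaringPPSl}. Your computation \(\prod_{i=1}^{r}\mu_i^{-1+\frac1n}=r^{\,r-\frac rn}\) is the only calculation that specialization requires.
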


\section{Auxiliary lemmas}
\begin{lemma}\label{lemmaDirichle} {\rm \cite{Karatsuba}.} \textbf{(Dirichlet's rational approximation).}
For every real number \(\alpha\) and every parameter \(P\geq 1\),
there exist coprime integers \(a\) and \(q\), $1\leq q\leq P,$
such that
\[
\left|\alpha-\frac{a}{q}\right|\leq \frac{1}{qP}.
\]
\end{lemma}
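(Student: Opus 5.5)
We prove Lemma~\ref{lemmaDirichle} by the pigeonhole principle applied to fractional parts of the multiples of \(\alpha\). Set \(M=\lfloor P\rfloor\ge 1\). Consider the \(M+1\) numbers
\[
\{0\cdot\alpha\},\ \{1\cdot\alpha\},\ \ldots,\ \{M\alpha\}\in[0,1),
\]
and partition \([0,1)\) into the \(M\) half-open intervals \([j/M,(j+1)/M)\), \(0\le j\le M-1\). Two of the numbers, say \(\{k_1\alpha\}\) and \(\{k_2\alpha\}\) with \(0\le k_1<k_2\le M\), lie in the same interval. Put \(q_0=k_2-k_1\), so \(1\le q_0\le M\le P\). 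Let \(a_0=\lfloor k_2\alpha\rfloor-\lfloor k_1\alpha\rfloor\). Then
\[
|q_0\alpha-a_0|<\frac1M .
\]

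The stated bound carries \(P\) rather than \(M\), and a non-strict inequality, so we improve this to \(\le 1/(M+1)\le 1/P\). Use the \(M+2\) points \(0,\{\alpha\},\dots,\{M\alpha\},1\) in \([0,1]\). Consecutive gaps among them sum to \(1\) over \(M+1\) gaps, so some gap is at most \(1/(M+1)\). Such a gap is either between two fractional parts \(\{k_1\alpha\},\{k_2\alpha\}\) with \(k_1\ne k_2\), or between some \(\{k\alpha\}\) and the endpoint \(1\) (or \(0\)), with \(k\ge1\). In every case we obtain \(1\le q_0\le M\) and \(a_0\in\mathbb Z\) with
\[
|q_0\alpha-a_0|\le \frac1{M+1}<\frac1P,
\]
since \(M+1>P\).

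Finally, reduce the fraction. Let \(d=(a_0,q_0)\), \(a=a_0/d\), \(q=q_0/d\). Then \((a,q)=1\) and \(1\le q\le q_0\le P\). Moreover
\[
|q\alpha-a|=\frac{|q_0\alpha-a_0|}{d}\le \frac1P .
\]
Dividing by \(q\) gives \(|\alpha-a/q|\le 1/(qP)\). (If \(a_0=0\), take \(a=0\), \(q=1\); the bound \(|\alpha|\le 1/(q_0P)\le 1/P\) still holds.)

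The only delicate step is the gap-counting with the endpoints \(0\) and \(1\) included. This is what ensures that one of the two indices being compared is always nonzero, so that \(q_0\ge1\), and that the non-integer \(P\) is handled correctly.
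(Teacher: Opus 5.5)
The paper gives no proof of this lemma; it only cites Karatsuba, so there is no in-paper argument to compare against. Your proof is the standard Dirichlet box-principle argument, and it is correct and complete.

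The key steps all check out:
\begin{itemize}
\item Adjoining the endpoint $1$ gives $M+1$ gaps of total length $1$, so some gap has length at most $1/(M+1)$. Since $M=\lfloor P\rfloor$, we have $M+1>P$, so this is less than $1/P$.
\item The only gap not involving some $\{k\alpha\}$ with $k\ge 1$ is the gap from $0$ to $1$. It has length $1>1/(M+1)$ because $M\ge1$, so it is never the gap selected. This guarantees $1\le q_0\le M\le P$.
\item Passing to $a=a_0/d$ and $q=q_0/d$ with $d=(a_0,q_0)$ keeps $1\le q\le P$. It also only improves the bound, and it correctly covers $a_0=0$, giving $a=0$, $q=1$.
\end{itemize}
Your first paragraph, giving the bound $<1/M$, is redundant. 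As you note yourself, it does not yield $1/P$ when $P$ is not an integer.

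For comparison, the other classical proof uses continued fractions. Let $p_s/q_s$ be the last convergent of $\alpha$ with $q_s\le P$. Then either $\alpha=p_s/q_s$, or $q_{s+1}>P$ and hence $|\alpha-p_s/q_s|\le 1/(q_sq_{s+1})<1/(q_sP)$. That route gives coprimality for free and identifies $a/q$ as a convergent, but it presupposes the theory of continued fractions. Your argument is fully self-contained, at the cost of the easy reduction step.
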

\begin{lemma} \label{Lemma formula summirov Puassona}  {\rm \cite{Vaughan-1985}}. 
Let \(f\) be a real-valued function on \([a,b]\) such that
\(f'(u)\) is monotonic. If, for every $u\in[a,b]$ and some integers \(H_1,H_2\), one has $H_1\leq f'(u)\leq H_2,$
then
\[
\sum_{a<n\leq b} e(f(n))
=
\sum_{H_1\leq h\leq H_2}
\int_a^b e(f(u)-hu)\,du
+
O(\ln(H+2)),
\]
where \(H=\max(|H_1|,|H_2|)\).
\end{lemma}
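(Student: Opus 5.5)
The plan is to follow the classical van der Corput route (as in Titchmarsh's Lemma~4.7 or Vaughan's Lemma~4.2): express the sum as a Stieltjes integral against the sawtooth function, expand the sawtooth in its Fourier series, and separate the frequencies lying in $[H_1,H_2]$, which give the main terms, from those outside it, which are handled by a monotonicity (second mean value) argument. Put $\psi(u)=u-\lfloor u\rfloor-\tfrac12$. Writing the sum as $\int_a^b e(f(u))\,d\lfloor u\rfloor$ and integrating by parts gives
\[
\sum_{a<n\le b}e(f(n))=\int_a^b e(f(u))\,du+2\pi i\int_a^b\psi(u)f'(u)e(f(u))\,du+O(1),
\]
where the $O(1)$ collects the boundary values $\psi(a)e(f(a))$ and $\psi(b)e(f(b))$.

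Next I will insert the truncated expansion
\[
\psi(u)=-\sum_{0<|k|\le K}\frac{e(ku)}{2\pi i k}+O\Bigl(\min\bigl(1,\tfrac{1}{K\|u\|}\bigr)\Bigr).
\]
The error term contributes $\ll \max_{[a,b]}|f'|\,(b-a)\,K^{-1}\ln K$, which tends to $0$ as $K\to\infty$. So it suffices to analyse, for each fixed $k\ne0$, the term $-k^{-1}\int_a^b f'(u)e(f(u)+ku)\,du$. After the relabelling $h=-k$, the phase is $f(u)-hu$, with derivative $f'(u)-h$.

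\emph{Frequencies $h\in[H_1,H_2]$.} Write $f'=h+(f'-h)$. The piece $(f'-h)e(f-hu)$ is the exact derivative $\frac{1}{2\pi i}\frac{d}{du}e(f(u)-hu)$, and the piece $h\,e(f-hu)$, divided by $h$, produces exactly $\int_a^b e(f(u)-hu)\,du$. Together with the $h=0$ term from the first display (note $0\in[H_1,H_2]$ only if it belongs there, and otherwise it too falls under the next case), these are precisely the main terms of the lemma. The exact-derivative pieces give boundary terms of the form $\frac{1}{h}e(f(b)-hb)$ and $\frac{1}{h}e(f(a)-ha)$. Since $e(-hb)$ and $e(-ha)$ are the Fourier modes of $\psi$ evaluated at the endpoints, I will sum these over all $h$ (including those outside the range) and recombine them into $\psi(a),\psi(b)$ plus a convergent remainder. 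This gives $O(1)$, up to the partial-sum error of the $\psi$-series, which I will control by taking $a,b$ half-integers or by letting $K\to\infty$ appropriately.

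\emph{Frequencies $h\notin[H_1,H_2]$.} I will instead write
\[
f'(u)e(f-hu)=\frac{f'(u)}{f'(u)-h}\,(f'(u)-h)e(f-hu).
\]
Since $f'$ is monotonic and $f'-h$ has constant sign, the factor $f'/(f'-h)=1+h/(f'-h)$ is monotonic. The second mean value theorem then bounds the integral by
\[
\ll \max\frac{|f'|}{|f'-h|}\ll \frac{H}{\operatorname{dist}(h,[H_1,H_2])}
\]
(or by $\ll1+|h|/\operatorname{dist}$, whichever is better). Multiplying by $1/|h|$ and summing over $h=H_2+j$ and $h=H_1-j$ with $j\ge1$, I expect a total of
\[
\ll\sum_{j\ge1}\min\Bigl(\frac{1}{j},\frac{H}{(H+j)j}\Bigr)+\sum_{j\ge1}\frac{1}{(H+j)j}\cdot H \ll \ln(H+2).
\]

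The main obstacle is the bookkeeping near the boundary of the frequency range, for $h$ adjacent to $H_1$ or $H_2$ and for small $|h|$ when $0\notin[H_1,H_2]$. There the bound $1/\operatorname{dist}$ is only $O(1)$ per term, and one must make sure that only $O(1)$ such terms occur and that the $1/h$ weight does not cause divergence when $H_1,H_2$ have opposite signs. I will first try splitting $h$ outside the range into $|h|\le 2H$, where the weight $1/|h|$ and distances at least $1$ give a harmonic-type sum $\ll\ln(H+2)$, and $|h|>2H$, where $|f'-h|\asymp|h|$ gives $\sum H/h^2\ll1$. This pinpoints the $\ln(H+2)$ loss exactly.
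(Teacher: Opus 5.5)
The paper gives no proof of this lemma; it simply quotes it from Vaughan. Your plan is the standard van der Corput argument behind that citation (cf.\ Titchmarsh, Lemma~4.7), and it is correct: Euler--Maclaurin with $\psi$, its Fourier expansion, exact evaluation for $h\in[H_1,H_2]$, and, for the other frequencies, the second mean value theorem with the monotone factor $f'/(f'-h)$ together with the split at $|h|\approx 2H$.

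The one slip is your claim that the boundary terms $\frac{1}{2\pi i h}\bigl[e(f(u)-hu)\bigr]_a^b$, $h\in[H_1,H_2]\setminus\{0\}$, become $O(1)$ after recombination with $\psi(a),\psi(b)$. This is false in general: for integer $a,b$ and $H_1=1$ these terms equal $\frac{e(f(b))-e(f(a))}{2\pi i}\sum_{h\le H_2}h^{-1}$, which is of order $\ln H$. No such device is needed, however, because the terms are trivially $\ll\sum_{1\le|h|\le H}|h|^{-1}\ll\ln(H+2)$, exactly the error the lemma allows.
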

\vspace{2pt}
 \begin{lemma}\label{Lemma otsenka trig integ po pervoy proizv}{\rm \cite{Arkhipov}}. 
 Let \(f\) be a real-valued function with monotonic derivative
\(f'(u)\), and let \(g\) be a monotonic function on \([a,b]\).
Suppose that
\[
|f'(u)|\geq m_1>0,
\qquad
|g(u)|\leq M
\]
for every \(u\in[a,b]\). Then
\[
\int_a^b g(u)e(f(u)) du
\ll
\frac{M}{m_1}.
\]
 \end{lemma}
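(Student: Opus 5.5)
The plan is to reduce the statement to the case \(g\equiv1\) by the second mean value theorem, and then to treat that case by a second application of the same theorem with the monotone weight \(1/f'(u)\).

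\textbf{Step 1: constant sign of \(f'\).} Since \(f'\) is monotonic and \(|f'(u)|\ge m_1>0\) on \([a,b]\), the set where \(f'>0\) is a subinterval, and so is the set where \(f'<0\). Splitting \([a,b]\) at the point where the sign changes costs only a factor \(2\). So I may assume \(f'(u)\ge m_1\) throughout; the case \(f'\le -m_1\) follows by complex conjugation, i.e. replacing \(f\) by \(-f\). On such a piece the function \(1/f'(u)\) is monotonic and satisfies \(0<1/f'(u)\le 1/m_1\).

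\textbf{Step 2: the case \(g\equiv1\).} I treat the real and imaginary parts separately and write
\[
\int_c^d\cos(2\pi f(u))\,du=\int_c^d\frac{1}{f'(u)}\,f'(u)\cos(2\pi f(u))\,du
\]
for any \([c,d]\subseteq[a,b]\). The weight \(1/f'\) is monotonic, so Bonnet's (second mean value) theorem in its general form gives some \(\eta\in[c,d]\) with
\[
\int_c^d\cos(2\pi f(u))\,du=\frac{1}{f'(c)}\int_c^\eta f'\cos(2\pi f)\,du+\frac{1}{f'(d)}\int_\eta^d f'\cos(2\pi f)\,du .
\]
Each inner integral is an exact derivative, namely \(\frac{1}{2\pi}\bigl[\sin(2\pi f)\bigr]\), so it is at most \(1/\pi\) in absolute value. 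This gives \(\bigl|\int_c^d\cos(2\pi f)\,du\bigr|\le 2/(\pi m_1)\). The sine part is handled the same way, so
\[
\Bigl|\int_c^d e(f(u))\,du\Bigr|\le \frac{C}{m_1}
\]
uniformly over all subintervals \([c,d]\).

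\textbf{Step 3: inserting \(g\).} Again I separate real and imaginary parts of \(e(f)\). Since \(g\) is monotonic and real, the general second mean value theorem gives \(\xi\in[a,b]\) with
\[
\int_a^b g\cos(2\pi f)\,du=g(a)\int_a^\xi\cos(2\pi f)\,du+g(b)\int_\xi^b\cos(2\pi f)\,du .
\]
By Step 2 and \(|g|\le M\), the right-hand side is \(\le 2M\cdot C/m_1\); the sine part is identical. Combining this with the splitting from Step 1 gives \(\int_a^b g(u)e(f(u))\,du\ll M/m_1\) with an absolute implied constant.

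\textbf{Main obstacle.} The only delicate point is that the product \(g/f'\) need not be monotonic. A single integration by parts against \(g/f'\) would therefore require controlling its total variation, which is not given. Using the mean value theorem twice, first with \(g\) and then with \(1/f'\) on the resulting subintervals, avoids this. It requires the bound of Step 2 to hold uniformly for every subinterval, which it does. A minor technical check is that Bonnet's theorem applies when \(f'\) is merely monotonic rather than continuous; this holds because monotonic functions are Riemann integrable and the theorem needs only monotonicity of the weight.
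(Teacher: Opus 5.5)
Your proof is correct, with an absolute implied constant. The paper does not prove this lemma; it simply quotes it from \cite{Arkhipov}. What you wrote is the standard proof of this first-derivative estimate. The second mean value theorem applied to the monotone weight $g$ reduces matters to integrals $\int_c^d e(f)$ over subintervals. A second application with the monotone weight $1/f'$ turns these into exact derivatives of $\sin(2\pi f)/(2\pi)$ and $-\cos(2\pi f)/(2\pi)$.

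Two small remarks. First, Step~1 is unnecessary. Since $f'$ is a derivative, it has the intermediate value property, so it cannot pass from $\le -m_1$ to $\ge m_1$. Hence $f'$ has constant sign on $[a,b]$. Moreover, a monotone function with the intermediate value property is continuous, which also settles your technical check about Bonnet's theorem.

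Second, the ``main obstacle'' is not a real one. Because $f'$ has constant sign, $1/f'$ is monotone with $|1/f'|\le 1/m_1$. The product of the two bounded monotone functions $g$ and $1/f'$ then has total variation at most $\sup|g|\cdot\operatorname{Var}(1/f')+\sup|1/f'|\cdot\operatorname{Var}(g)\le 3M/m_1$. So a single Riemann--Stieltjes integration by parts of $\int_a^b \frac{g}{2\pi i f'}\,d\bigl(e(f)\bigr)$ gives the bound just as well. Your double use of the mean value theorem is a perfectly good way to proceed, but it is not forced.
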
\vspace{2pt}

\begin{lemma}\label{orsumm} {\rm \cite{Arkhipov}.}
Let $n \geq 3$ be an integer, and let
\[
    f(x)=a_nx^n+\cdots+a_1x+a_0
\]
be a polynomial with integer coefficients. Let $q$ also be a natural number and
\[
    (a_n,\ldots,a_1,q)=1.
\]
Then
\[
    |\sum\limits_{x=1}^{q}e\Big(\frac{f(x)}{q}\Big)| \leq c(n) q^{1-\frac{1}{n}},
\]
where
\[
    c(n)=
    \begin{cases}
        e^{4n}, & n\geq 10,\\[2mm]
        e^{nA(n)}, & 3\leq n\leq 9,
    \end{cases}
\]
and
\[
\begin{aligned}
    A(3)&=6.1, & A(4)&=5.5, & A(5)&=5, & A(6)&=4.7,\\
    A(7)&=4.4, & A(8)&=4.2, & A(9)&=4.05.
\end{aligned}
\]
\end{lemma}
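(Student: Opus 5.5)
The plan is the classical Hua--Chen method, organized so that the constants stay explicit. Write \(S(f,q)=\sum_{x=1}^{q}e(f(x)/q)\). First reduce to prime powers by multiplicativity. If \(q=q_1q_2\) with \((q_1,q_2)=1\), put \(x=q_2x_1+q_1x_2\). This gives
\[
S(f,q)=S(f_1,q_1)\,S(f_2,q_2),
\qquad
f_1(x_1)=\frac{f(q_2x_1)-a_0}{q_2},\quad
f_2(x_2)=\frac{f(q_1x_2)-a_0}{q_1}
\]
(up to a harmless constant phase). Because \(q_2\) is invertible modulo \(q_1\), the coprimality condition \((a_n,\ldots,a_1,q)=1\) passes to \((f_1,q_1)\) and to \((f_2,q_2)\). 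It therefore suffices to prove \(|S(f,p^l)|\le C_p\,p^{l(1-\frac1n)}\) for every prime power, with \(C_p=1\) for all \(p\) beyond an explicit threshold \(P(n)\). Then \(c(n)=\prod_{p\le P(n)}C_p\), and the remaining task is to show that this product is at most \(e^{4n}\), respectively \(e^{nA(n)}\).

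\textbf{Case \(l=1\).} If \(f\bmod p\) is a nonconstant polynomial function of degree at most \(n<p\), Weil's bound gives \(|S|\le (n-1)p^{1/2}\). This is at most \(p^{1-1/n}\) as soon as \(p\ge (n-1)^{2n/(n-2)}\). For the finitely many smaller primes use the trivial bound \(|S|\le p=p^{1/n}\cdot p^{1-1/n}\), so these primes contribute the factor \(p^{1/n}\). For \(p\le n\), reduce \(f\) modulo \(x^p-x\) and argue in the same trivial way.

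\textbf{Case \(l\ge2\).} Let \(p^t\) be the exact power of \(p\) dividing the content of \(f'\); here \(t\le v_p(n)\), because some \(a_j\) with \(j\le n\) is prime to \(p\). Split \(x\bmod p^l\) by residues \(\xi\bmod p\) and write \(x=\xi+p^{l-t-1}z\), or more simply \(x=\xi+py\) followed by a Taylor expansion. Residues \(\xi\) that are not roots of \(p^{-t}f'\bmod p\) give a complete linear sum in \(z\), which vanishes once \(l>2t+1\). For a root \(\xi\) of multiplicity \(m\), the substitution \(x=\xi+py\) yields a polynomial \(g_\xi(y)=p^{-\sigma}\bigl(f(\xi+py)-f(\xi)\bigr)\) with \(1\le\sigma\le m+1\), and
\[
S_\xi=p^{\sigma-1}S(g_\xi,p^{l-\sigma}).
\]
The new derivative has root multiplicities summing to at most \(m\), so one inducts on \(l\) and on the multiplicity. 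The inductive claim is \(|S_\xi|\le p^{l(1-\frac1{m+1})}\) up to small-prime factors, for stationary points of multiplicity \(m\). Summing over roots with \(\sum m\le n-1\) gives \(|S(f,p^l)|\le (n-1)p^{l(1-\frac1n)}\) in general. Sharpening this to exponent exactly \(1-\frac1n\) with constant \(1\) for large \(p\) is done as follows. When there is a single root of full multiplicity \(n-1\), the bound \(p^{l(1-1/n)}\) holds without the factor \(n-1\). When there are several roots, each has \(m\le n-2\), and the saving \(p^{l(\frac1{m+1}-\frac1n)}\ge p^{l/(n(n-1))}\) absorbs the factor \(n-1\) once \(p^{l}\) exceeds an explicit power of \(n\). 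The finitely many pairs \((p,l)\) left over contribute bounded factors.

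The main obstacle is the explicit bookkeeping rather than the structure of the argument. One has to track, for each small prime \(p\), the extra factor produced by the cases \(t>0\) (\(p\mid n\)), the leftover pairs \((p,l)\) with small \(l\), and the Weil cases with \(p<(n-1)^{2n/(n-2)}\). One then has to check that \(\sum_{p\le P(n)}\log C_p\le nA(n)\), or \(\le 4n\) for \(n\ge10\). I would first bound \(\log C_p\le \frac{c\log p}{n}\cdot(\text{number of bad levels})\). Next, apply Chebyshev-type estimates \(\theta(x)\le 1.01624x\) to the sum over primes up to \(P(n)\). Finally, for \(3\le n\le 9\), evaluate the finitely many small primes numerically to get the tabulated values of \(A(n)\).
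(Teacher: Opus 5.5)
The paper does not prove this lemma; it only cites it. It is Chen Jingrun's explicit form of Hua's bound. Your skeleton is the standard route to it: multiplicativity, Weil at $l=1$, and Hua's stationary-point recursion for $l\ge2$. The first two steps are fine. The gap is in the step that actually decides the size of $c(n)$, namely which primes are allowed to have $C_p>1$.

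Your absorption step bounds the several-roots case by $(n-1)p^{l(1-\frac1{n-1})}$. Even if you add the trivial bound $(n-1)p^{l-1}$, at $l=n-1$ both give $(n-1)p^{n-2}$. This exceeds $p^{(n-1)(1-\frac1n)}$ for every $p<(n-1)^n$, and for $p\le(n-1)^{n/2}$ the resulting $C_p$ is at least $\sqrt{n-1}$. Then $\sum_{p\le P(n)}\log C_p\gg\pi\bigl((n-1)^{n/2}\bigr)\log n$, and no Chebyshev estimate brings this down to $4n$. Even with $\log C_p\asymp\frac{\log p}{n}$ on the exceptional set, the sum has order $\theta(P(n))/n$, so you need $P(n)=O(n^2)$. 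Your single-root case does not escape this: the inner polynomial $g_\xi$ can have several stationary points, so its constant is only as good as the several-roots bound fed into the induction. As written, the plan yields some finite $c(n)$, but not $e^{4n}$ or $e^{nA(n)}$.

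What is missing is to run the induction on the effective degree $m=\deg(g'\bmod p)$ with constant exactly $1$. For each root $\xi$ of multiplicity $\mu_\xi$, keep the bound $|S_\xi|\le\min\bigl(p^{l-1},p^{l(1-\frac1{\mu_\xi+1})}\bigr)$. The second term comes from the inductive hypothesis together with $\sigma\le\mu_\xi+1$. Measured against $p^{l(1-\frac1{m+1})}$, the two terms cross at $l=\mu_\xi+1$, so each root contributes a ratio at most $p^{-(m-\mu_\xi)/(m+1)}$, uniformly in $l$. With at least two roots, convexity reduces $\sum_\xi p^{-(m-\mu_\xi)/(m+1)}\le1$ to two inequalities, $u+u^{m-1}\le1$ and $mu^{m-1}\le1$, where $u=p^{-1/(m+1)}$. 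Both hold once $p\ge(n-1)^{2n/(n-2)}$. So the only threshold is Weil's, which is $O(n^2)$. Only with a threshold of this size can $\theta(x)\le1.01624x$ give $\sum_p\log C_p=O(n)$.

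The small-prime bookkeeping you postpone is where the values $A(n)$ are decided, and two of your claims there are false. First, $t\le v_p(n)$ fails: for $n=5$, $p=2$, $f=x^4+2x^5$ one has $t=1$. The correct constraint is $p^t\le n$. Second, for $t>0$ one only has $\sigma\le m+t+1$, not $\sigma\le m+1$. Without explicit bounds for $C_p$ in the ranges $p\le n$ and $n<p<(n-1)^{2n/(n-2)}$, the tabulated constants do not follow, and a numerical check cannot replace them. In the latter range, inner-sum constants larger than $1$ also propagate through the recursion.
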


\begin{lemma} \label{Lemma otsenka trig int po n-oy proizvodn}{\rm \cite{Arkhipov}}. Let \(f(u)\) be a real function with a derivative of order \(n>1\)
on the interval \([a,b]\). Suppose there exists a constant \(A>0\) such that
$A\leq |f^{(n)}(u)|$
for all \(u\in[a,b]\). Then the following estimate holds:
$$
\left|\int\limits_a^be(f(u))du\right|\le \min (b-a,6nA^{-\frac{1}{n}}).
$$
\end{lemma}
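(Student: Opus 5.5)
We prove the bound by combining the first-derivative test for oscillatory integrals with a sublevel-set estimate for $f'$, rather than by a naive induction on $n$. The trivial bound $|\int_a^b e(f(u))\,du|\le b-a$ is immediate, so it remains to show
$$\Bigl|\int_a^b e(f(u))\,du\Bigr|\le 6nA^{-1/n}.$$

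\textbf{Sign and monotonicity structure.} Derivatives have the intermediate value property, so the hypothesis $|f^{(n)}|\ge A>0$ forces $f^{(n)}$ to keep a constant sign on $[a,b]$; we may assume $f^{(n)}\ge A$. It follows that $f^{(n-1)}$ is strictly monotone. By repeated use of Rolle's theorem, $f^{(j)}$ has at most $n-j$ zeros for $1\le j\le n$. Hence $f''$ has at most $n-2$ zeros, and $[a,b]$ splits into at most $n-1$ subintervals on each of which $f'$ is monotone.

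\textbf{Splitting by the size of $f'$.} Fix a parameter $\delta>0$ and put
$$E=\{u\in[a,b]:|f'(u)|<\delta\}.$$

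First, we bound the measure of $E$. Apply the classical sublevel-set estimate to $g=f'$, whose $(n-1)$-st derivative is $f^{(n)}\ge A$. The estimate is proved by Lagrange interpolation: if $E$ had large measure, pick $n$ well-separated points of $E$ and express the divided difference of $g$ through them as $g^{(n-1)}(\xi)/(n-1)!$. With $m=n-1$ this gives
$$|E|\le \Bigl(m!\,2^{2m-1}\,\frac{\delta}{A}\Bigr)^{1/m}\le \frac{4m}{e}\Bigl(\frac{\delta}{A}\Bigr)^{1/m}\quad(\text{up to a harmless factor}).$$
Moreover, $E$ is a union of at most $2(n-1)$ intervals, since on each monotonicity interval of $f'$ the set $\{|f'|<\delta\}$ is a single interval.

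Second, we bound the integral off $E$. The complement $[a,b]\setminus E$ consists of at most about $2n$ intervals, and on each of them $f'$ is monotone with $|f'|\ge\delta$. On each such interval we write
$$e(f)=\frac{\bigl(e(f)\bigr)'}{2\pi i f'}$$
and integrate by parts. Since $1/f'$ is monotone there, its total variation is controlled by its endpoint values. This is the explicit form of Lemma~\ref{Lemma otsenka trig integ po pervoy proizv} with $g\equiv1$, and it gives a bound $1/(\pi\delta)$ per interval.

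\textbf{Combining and choosing $\delta$.} Putting the two estimates together,
$$\Bigl|\int_a^b e(f)\Bigr|\le |E|+\frac{2n}{\pi\delta}\le \frac{4(n-1)}{e}\Bigl(\frac{\delta}{A}\Bigr)^{1/(n-1)}+\frac{2n}{\pi\delta}.$$
Choose $\delta=A^{(n-1)/n}$. Then $(\delta/A)^{1/(n-1)}=A^{-1/n}$ and $1/\delta\le A^{-1/n}$ up to the normalisation, so the right-hand side is at most
$$\Bigl(\frac{4(n-1)}{e}+\frac{2n}{\pi}\Bigr)A^{-1/n}\le 6nA^{-1/n}.$$

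The exponent $-1/n$ scales correctly; one can check that replacing $f(u)$ by $f(tu)$ preserves the balance between the two terms.

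\textbf{Main obstacle.} The main difficulty is bookkeeping of the constant so that it comes out linear in $n$. A naive induction on $n$, in which one removes only the single interval where $|f^{(n-1)}|$ is small and applies the hypothesis for order $n-1$ on the two remaining pieces, produces constants growing roughly like $2^{n/2}$. That is why I go directly through the sublevel set of $f'$ together with the bounded number of monotonicity intervals.

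If the interpolation constant $(m!\,2^{2m-1})^{1/m}$ proves awkward to pin down, I would fall back on the cruder estimate $(m!)^{1/m}\le m$ together with a factor $4$. I would then check numerically that the total stays below $6n$ for $n\ge2$, treating $n=2$ separately through the classical second-derivative bound $8A^{-1/2}\le 12A^{-1/2}$.
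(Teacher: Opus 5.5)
The paper does not prove this lemma; it quotes it from the reference [Arkhipov], so there is no in-paper argument to compare against. Your route differs from the textbook van der Corput induction, and it is the right one for a constant linear in $n$. Darboux fixes the sign of $f^{(n)}$, and Rolle cuts $[a,b]$ into at most $n-1$ intervals on which $f'$ is monotone. The set $E=\{|f'|<\delta\}$ is controlled by a sublevel-set bound for $g=f'$, using $g^{(n-1)}=f^{(n)}$. Each of the at most $2(n-1)$ closed pieces of $[a,b]\setminus E$ has $f'$ monotone, of constant sign and with $|f'|\ge\delta$, so integration by parts bounds its contribution by $1/(\pi\delta)$. Your remark that the induction loses a factor $2$ per step and ends near $n2^{n/2}$ is also accurate.

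The real error is the choice of $\delta$. With $\delta=A^{(n-1)/n}$ you get $(\delta/A)^{1/(n-1)}=A^{-1/(n(n-1))}$ and $1/\delta=A^{-(n-1)/n}$. For $n\ge3$ the first exceeds $A^{-1/n}$ when $A>1$, and the second exceeds it when $A<1$. No normalisation fixes this, because the estimate is scale invariant: $u\mapsto tu$ multiplies $A$ by $t^n$ and $f'$ by $t$, which forces $\delta\asymp A^{1/n}$. Taking $\delta=A^{1/n}$ gives $(\delta/A)^{1/(n-1)}=1/\delta=A^{-1/n}$ exactly, hence
\[
\Bigl|\int_a^b e(f(u))\,du\Bigr|\le\Bigl(C_{n-1}+\frac{2(n-1)}{\pi}\Bigr)A^{-1/n},
\]
where $C_m$ is the sublevel constant.

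Also discard the inequality $(m!\,2^{2m-1})^{1/m}\le 4m/e$. It is false for every $m$; already for $m=1$ the left side is $2>4/e$. The bound $(m!\,2^{2m-1})^{1/m}<4(m!)^{1/m}\le 4m$ suffices. It yields $(4+2/\pi)(n-1)A^{-1/n}<6nA^{-1/n}$ for all $n\ge2$, so $n=2$ needs no separate treatment.

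If you want to avoid quoting the interpolation constant, pick $x_0<\dots<x_m$ in $\overline{E}$ with $|E\cap[x_i,x_j]|=(j-i)|E|/m$. Then $\prod_{j\ne i}|x_i-x_j|\ge i!\,(m-i)!\,(|E|/m)^m$. Compare this with the identity $\sum_i g(x_i)/\prod_{j\ne i}(x_i-x_j)=g^{(m)}(\xi)/m!$, using $|g|\le\delta$ on $\overline{E}$. This gives $|E|\le 2m(\delta/A)^{1/m}$ directly and makes the whole proof self-contained.
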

\begin{lemma}\label{Lemma-Weyl-3} {\rm \cite{Rakh-2018}}. Let $x\ge x_0>0$, $y_0<y\le 0.01x$, $\tau(\cdot)$~--~the divisor function, $\alpha$~---~a real number,
$$
\left|\alpha-\frac{a}{q}\right|\le \frac{1}{q^2}, \qquad (a,q)=1,
$$
then the following estimate holds:
\begin{align*}
|T(\alpha;x,y)|\le 2y\left(4n!\left(\frac{1}{q}+\frac{1}{y}+\frac{q\ln q}{y^n}\right)\max_{h<y^{n-1}}\tau(h)\right)^{\frac{1}{2^{n-1}}},
\end{align*}
\end{lemma}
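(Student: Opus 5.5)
The bound is Weyl's inequality adapted to a sum of length $y$, so I would prove it by $n-1$ Weyl differencings. This turns the degree-$n$ phase into a linear one. I then sum the resulting geometric series, count the multiplicities of the products of differences with the divisor function, and finish with the classical reciprocal-sum estimate using $|\alpha-a/q|\le q^{-2}$. The shift $m\mapsto m+(x-y)$ does not change the leading coefficient $\alpha$. So I may regard $T(\alpha;x,y)$ as $\sum_{0<k\le y}e(f(k))$, where $f$ is a real polynomial of degree $n$ with leading coefficient $\alpha$.

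\emph{Step 1 (differencing).} Apply Cauchy--Schwarz $n-1$ times in the form $|\sum_{k\in I}e(g(k))|^2\le y+2\sum_{0<h<y}|\sum_{k\in I_h}e(g(k+h)-g(k))|$, each time with the new sub-interval $I_h\subset I$. By induction this gives
\[
|T|^{2^{n-1}}\le (2y)^{2^{n-1}-n}\sum_{|h_1|,\ldots,|h_{n-1}|<y}\Bigl|\sum_{k\in I(\mathbf h)}e\bigl(\alpha\, n!\,h_1\cdots h_{n-1}k+c(\mathbf h)\bigr)\Bigr|,
\]
where $I(\mathbf h)\subset(0,y]$ is an interval. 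The $n$-fold difference operator kills every lower-order term of $f$ except a linear one in $k$, whose coefficient is $\alpha\,n!\,h_1\cdots h_{n-1}$. The constant $c(\mathbf h)$ is irrelevant.

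\emph{Step 2 (reduction to a reciprocal sum).} Bound the inner sum by $\min(y,\|\alpha n!h_1\cdots h_{n-1}\|^{-1})$. Tuples with some $h_i=0$ contribute at most $(n-1)(2y)^{n-2}y$, which gives the $1/y$ term. For the remaining tuples put $h=n!\,|h_1\cdots h_{n-1}|$. The number of tuples giving the same $h$ is at most $2^{n-1}\tau(h)^{n-2}$. I would absorb this, after a cruder argument, into the factor $\max_{h<y^{n-1}}\tau(h)$ as the statement does. This amounts to majorising by $\tau$ of the product $h_1\cdots h_{n-1}<y^{n-1}$, and the factor $n!$ is kept outside as an extra range $h\le n!\,y^{n-1}$. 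Thus
\[
|T|^{2^{n-1}}\ll (2y)^{2^{n-1}-n}\Bigl(y^{n-1}+\max_{h<y^{n-1}}\tau(h)\sum_{1\le h\le n!y^{n-1}}\min\bigl(y,\|\alpha h\|^{-1}\bigr)\Bigr).
\]

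\emph{Step 3 (the reciprocal sum).} Use the standard lemma: if $|\alpha-a/q|\le q^{-2}$, then $\sum_{h\le Z}\min(y,\|\alpha h\|^{-1})\le c\,(Z/q+1)(y+q\ln q)$. Take $Z=n!\,y^{n-1}$. This gives $n!\,y^n\bigl(1/q+1/y+q\ln q/y^n\bigr)$, plus a cross term $Z\ln q$. The cross term is dominated by $Zy/q$ when $q\le y$ and by $q\ln q$ times $Z/y$ otherwise. We may also assume $q\le y^n$, since otherwise the claimed bound exceeds the trivial bound $y$. Inserting this and taking the $2^{n-1}$-th root gives $|T|\le 2y\,\bigl(C\,n!\,(1/q+1/y+q\ln q/y^n)\max\tau(h)\bigr)^{1/2^{n-1}}$.

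The main obstacle is bookkeeping rather than ideas: the aim is to reach exactly the constant $4n!$ and the prefactor $2y$. That requires tracking the constants in each Cauchy--Schwarz step (using $y\le 0.01x$ only to ensure that the intervals stay inside the range) and in the reciprocal-sum lemma. It also requires handling the multiplicity count so that only a single $\max\tau(h)$ appears. If the sharp constant resists, I would first prove the bound with an unspecified constant depending on $n$, and then refine the count.
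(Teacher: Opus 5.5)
The paper quotes this lemma without proof, so your argument has to stand on its own. Differencing $n-1$ times and finishing with a reciprocal-sum bound is the right skeleton. Step~1 is fine up to small slips: it is the $(n-1)$-fold difference that is linear in $k$, and $(x-y,x]$ may contain $\lfloor y\rfloor+1$ integers.

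The first genuine gap is the multiplicity count in Step~2. The number of tuples with $|h_1\cdots h_{n-1}|=g$ is $2^{n-1}$ times the number of ordered factorisations of $g$ into $n-1$ positive factors below $y$, i.e.\ essentially $\tau_{n-1}(g)$. No ``cruder argument'' turns this, or your bound $\tau(g)^{n-2}$, into a single $\tau$ when $n\ge4$. For squarefree $g$ with $m$ prime factors the count is $(n-1)^m$, against $\tau(g)=2^m$. Even the maxima differ. Balanced factorisations of a primorial of size $y^{(n-1)(1-o(1))}$ give multiplicity $y^{(n-1)(\ln(n-1)+o(1))/\ln\ln y}$, whereas $\max_{h<y^{n-1}}\tau(h)=y^{(n-1)(\ln 2+o(1))/\ln\ln y}$. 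So bounding by the maximal multiplicity can only produce $\max_{g<y^{n-1}}\tau_{n-1}(g)$, which agrees with the stated factor only for $n=3$.

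The second gap is in Step~3. After division by $n!\,y^n$, the cross term $Z\ln q$ becomes $\ln q/y$, and your domination claim fails on both branches. First, $Z\ln q\le Zy/q$ requires $q\ln q\le y$, not $q\le y$. Second, for $q>y$ your comparison quantity $Zq\ln q/y$ corresponds to $q\ln q/y^2$, which is not among $1/q$, $1/y$, $q\ln q/y^n$. For $y/\ln y<q<y^{n-1}$ your chain therefore loses a factor as large as $\ln q$. This cannot be repaired inside your reduction: for $\alpha=a/q$ with $q\le y$, every period of $h$ genuinely contributes $\asymp y+q\ln q$ to $\sum_{h\le Z}\min(y,\|\alpha h\|^{-1})$. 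What the proposal actually proves is the classical form
\[
|T(\alpha;x,y)|\le 2y\Bigl(C_n\Bigl(\frac1q+\frac{\ln q}{y}+\frac{q\ln q}{y^n}\Bigr)\max_{g<y^{n-1}}\tau_{n-1}(g)\Bigr)^{1/2^{n-1}}.
\]
Reaching the lemma exactly as stated would need a new way of handling both the multiplicities and the term $Z\ln q$, which you do not supply.

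Note, however, that the paper uses the lemma only in estimating $J(\mathfrak m_1)$, in the form $|T|\ll y^{1+\varepsilon}(1/y+1/q+q/y^n)^{2^{1-n}}$. That form does follow from your argument, since $\tau_{n-1}(g)\ll g^{\varepsilon}$ and one may assume $q\le y^n$.
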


\vspace{4pt}

\section{\bf Proof of Theorem \ref{ShortExposumIR}}
The proof extends the argument developed in \cite{Mirzo-2026}
for quartic sums to an arbitrary degree \(n\). We include the
details for completeness.

We have
\[
T\left(\frac{a}{q}+\lambda;x,y\right)
=
\sum_{x-y<k\leq x}
e\left(\frac{ak^n}{q}+\lambda k^n\right).
\]
We use the expansion
\[
e\left(\frac{ak^n}{q}\right)
=
\frac{1}{q}
\sum_{b=1}^{q}
S_b(a,q)e\left(-\frac{bk}{q}\right),
\]
where
\[
S_b(a,q)
=
\sum_{v=1}^{q}
e\left(\frac{av^n+bv}{q}\right).
\]
It follows that
\[
T\left(\frac{a}{q}+\lambda;x,y\right)
=
\frac{1}{q}
\sum_{b=1}^{q}
S_b(a,q) T_b,
\]
where
\begin{equation}
\label{ExpsumIR01}
T_b
=
\sum_{x-y<k\leq x}
e\left(\lambda k^n-\frac{b}{q}k\right).
\end{equation}

Put
\[
f_b(u)=\lambda u^n-\frac{b}{q}u.
\]
Then
\[
f_b'(u)=n\lambda u^{n-1}-\frac{b}{q}.
\]
Since $f_b'(u)$ is monotonic on the interval $[x-y,x]$, Lemma \ref{Lemma formula summirov Puassona} gives
\begin{equation}
\label{ExpsumIR02}
T_b
=
\sum_{H_{1b}\leq h\leq H_{2b}}
\int_{x-y}^{x}
e\left(
\lambda u^n-\left(\frac{b}{q}+h\right)u
\right)\,du
+
O\left(\ln(H_b+2)\right),
\end{equation}
where \(H_{1b}\) is the greatest integer and \(H_{2b}\) is the least
integer such that
\[
H_{1b}\leq f_b'(u)\leq H_{2b}
\]
for all \(u\in[x-y,x]\), and
\[
H_b=\max\left\{|H_{1b}|,|H_{2b}|\right\}.
\]

For all $u\in[x-y,x]$,
\[
|f_b'(u)|
\le
n|\lambda|u^{n-1}+\frac{b}{q}\le n|\lambda|x^{n-1}+\frac{b}{q}.
\]
Since \(b\leq q\), the definitions of \(H_{1b}\) and
\(H_{2b}\) imply
\[
H_b\ll 1+|\lambda|x^{n-1}.
\]

In (\ref{ExpsumIR02}), make the substitution
\[
m=b+qh.
\]
Then
\[
\frac{b}{q}+h=\frac{m}{q},
\]
and the corresponding integral takes the form
\[
I_m
=
\int_{x-y}^{x}
e\left(\lambda u^n-\frac{m}{q}u\right)\,du.
\]
Since $m\equiv b\pmod q$, we have
\[
S_b(a,q)=S_m(a,q),
\]
where
\[
S_m(a,q)
=
\sum_{k=1}^{q}
e\left(\frac{ak^n+mk}{q}\right).
\]
Since \((a,q)=1\), applying Lemma~\ref{orsumm} to
\(S_m(a,q)\), we obtain
\begin{equation}
\label{ExpsumIR03}
S_m(a,q)\ll q^{1-\frac{1}{n}}.
\end{equation}

Let
\[
\mathcal M
=
\left\{
m=b+qh:
1\leq b\leq q,\quad
H_{1b}\leq h\leq H_{2b}
\right\}.
\]
For $1\leq b\leq q$, the representation of every $m\in\mathcal M$
in the form $m=b+qh$ is unique. Therefore, by (\ref{ExpsumIR01})--(\ref{ExpsumIR03}),
\begin{equation}
\label{ExpsumIR04}
T\left(\frac{a}{q}+\lambda;x,y\right)
\ll
q^{-\frac{1}{n}}
\sum_{m\in\mathcal M}|I_m|
+
q^{1-\frac{1}{n}}
\ln\left(2+|\lambda|x^{n-1}\right).
\end{equation}

We now estimate
\[
\sum_{m\in\mathcal M}|I_m|.
\]
Put
\[
F_m(u)=\lambda u^n-\frac{m}{q}u.
\]
Then
\[
F_m'(u)=n\lambda u^{n-1}-\frac{m}{q},
\qquad
F_m''(u)=n(n-1)\lambda u^{n-2}.
\]
Since $y\leq 0.01x$, we have $u\asymp x$ on $[x-y,x]$, and hence
\begin{equation*}
|F_m''(u)|
\asymp
|\lambda|x^{n-2}.
\end{equation*}

Consequently, if a stationary point lies in the interval
\[
x-y\le u\le x,
\]
then the corresponding value \(m\) must belong to the interval
\[
\Delta=
\left[
\min\left(nq\lambda(x-y)^{n-1},\;nq\lambda x^{n-1}\right),
\max\left(nq\lambda(x-y)^{n-1},\;nq\lambda x^{n-1}\right)
\right].
\]
Conversely, if \(m\in\Delta\), then the equation
\[
m=nq\lambda u^{n-1}
\]
has a solution \(u\in[x-y,x]\), that is, the integral \(I_m\) has a stationary point.
Consequently, the integral \(I_m\) has a stationary point on the interval
\([x-y,x]\) if and only if \(m\in\Delta\).
The length of \(\Delta\) satisfies the estimate
\[
|\Delta|
\ll
q|\lambda|\bigl(x^{n-1}-(x-y)^{n-1}\bigr).
\]
Since
\[
x^{n-1}-(x-y)^{n-1}\ll x^{n-2}y,
\]
we have
\[
|\Delta|\ll q|\lambda|x^{n-2}y.
\]

We distinguish the set of stationary and nearly stationary values
\[
\mathcal M_0=
\{m\in\mathcal M:\operatorname{dist}(m,\Delta)\le 1\},
\]
where \(\operatorname{dist}(m,\Delta)\) denotes the distance from the point \(m\)
to the interval \(\Delta\).
If \(\Delta=[\Delta_1,\Delta_2]\), then
\[
\operatorname{dist}(m,\Delta)=
\begin{cases}
0, & \Delta_1\le m\le \Delta_2,\\
\Delta_1-m, & m<\Delta_1,\\
m-\Delta_2, & m>\Delta_2.
\end{cases}
\]

Then
\[
|\mathcal M_0|\ll 1+|\Delta|
\ll 1+q|\lambda|x^{n-2}y.
\]
For \(m\in\mathcal M_0\), we apply the second-derivative estimate in Lemma~\ref{Lemma otsenka trig int po n-oy proizvodn}. Since
\[
|F_m''(u)|\asymp |\lambda|x^{n-2},
\]
we obtain
\[
|I_m|\ll (|\lambda|x^{n-2})^{-\frac{1}{2}}.
\]
Consequently,
\[
\sum_{m\in\mathcal M_0}|I_m|
\ll
(1+q|\lambda|x^{n-2}y)(|\lambda|x^{n-2})^{-\frac{1}{2}}.
\]

Now consider \(m\in\mathcal M\setminus\mathcal M_0\).
Put
\[
\rho=[\operatorname{dist}(m,\Delta)],
\]
where \([t]\) denotes the integer part of \(t\).

Then \(\rho\ge 1\). For all \(u\in[x-y,x]\), the quantity
\(nq\lambda u^{n-1}\) belongs to the interval \(\Delta\); therefore,
\[
|nq\lambda u^{n-1}-m|\ge \rho.
\]
Consequently,
\[
|F_m'(u)|
=
\left|n\lambda u^{n-1}-\frac{m}{q}\right|
=
\frac{1}{q} |nq\lambda u^{n-1}-m|
\ge
\frac{\rho}{q}.
\]
Since \(F_m'(u)\) is monotonic on the interval \([x-y,x]\), Lemma \ref{Lemma otsenka trig integ po pervoy proizv} gives
\[
|I_m|\ll \frac{q}{\rho}.
\]

We next show that \(\rho\leq q\). By the definitions of \(H_{1b}\) and
\(H_{2b}\) and the continuity of \(f_b'\), for every
\(m=b+qh\in\mathcal M\), there exist \(u_m\in[x-y,x]\) and a real
number \(\delta_m\), with \(|\delta_m|\leq1\), such that
\[
h=f_b'(u_m)+\delta_m.
\]

Consequently,
\[
\begin{aligned}
m
&=b+qh=b+q\bigl(f_b'(u_m)+\delta_m\bigr)=b+q\bigl(n\lambda u_m^{n-1}-\frac{b}{q}\bigr)+q\delta_m=nq\lambda u_m^{n-1}+q\delta_m.
\end{aligned}
\]
Since $nq\lambda u_m^{n-1}\in\Delta$ and $|q\delta_m|\leq q,$
it follows that 
\[
\Delta_1-q\leq m\leq\Delta_2+q.
\]

Consequently, the remaining values
\(m\in\mathcal M\setminus\mathcal M_0\) lie in the two intervals
\[
\Delta_1-q\leq m<\Delta_1-1
\]
or
\[
\Delta_2+1<m\leq\Delta_2+q.
\]

Hence, \(1\leq \rho\leq q\). For each fixed \(\rho\), there are at
most two values \(m\in\mathcal M\setminus\mathcal M_0\) corresponding
to it, one on each side of \(\Delta\). Therefore,
\[
\sum_{m\in\mathcal M\setminus\mathcal M_0}|I_m|
\ll
\sum_{1\leq \rho\leq q}\frac{q}{\rho}
\ll q\ln(2q).
\]

Thus,
\[
\sum_{m\in\mathcal M}|I_m|
\ll
q\ln(2q)
+
(1+q|\lambda|x^{n-2}y)(|\lambda|x^{n-2})^{-\frac{1}{2}}.
\]
Since
\[
\ln(2q)
\leq
\ln q+\ln\left(2+|\lambda|x^{n-1}\right),
\]
substituting the preceding estimate into (\ref{ExpsumIR04}), we obtain
the following estimate for \(T\):
\begin{equation}
\label{ExpsumIR05}
T\left(\frac{a}{q}+\lambda;x,y\right)
\ll
q^{1-\frac{1}{n}}\ln q
+
q^{-\frac{1}{n}}(|\lambda|x^{n-2})^{-\frac{1}{2}}
+
q^{1-\frac{1}{n}}y(|\lambda|x^{n-2})^{\frac{1}{2}}
+
q^{1-\frac{1}{n}}\ln(2+|\lambda|x^{n-1}).
\end{equation}

From the lower bound \(
|\lambda|\gg \frac{1}{qx^{n-2}y}
\)
it follows that
\(
|\lambda|x^{n-2}\gg \frac{1}{qy}.
\)
Therefore,
\begin{equation}
\label{ExpsumIR06}
q^{-\frac{1}{n}}(|\lambda|x^{n-2})^{-\frac{1}{2}}
\ll
q^{-\frac{1}{n}}(qy)^{\frac{1}{2}}
=
q^{\frac{1}{2}-\frac{1}{n}}y^{\frac{1}{2}}.
\end{equation}

On the other hand, from the upper bound
\(
|\lambda|\ll \frac{1}{qy^{n-1}}
\)
we obtain
\(
|\lambda|x^{n-2}\ll \frac{x^{n-2}}{qy^{n-1}}.
\)
Consequently,
\begin{equation}
\label{ExpsumIR07}
q^{1-\frac{1}{n}}y(|\lambda|x^{n-2})^{\frac{1}{2}}
\ll
q^{1-\frac{1}{n}}y
\left(\frac{x^{n-2}}{qy^{n-1}}\right)^{\frac{1}{2}}
=
q^{\frac{1}{2}-\frac{1}{n}}\cdot\frac{x^{\frac{n-2}{2}}}{y^{\frac{n-3}{2}}}.
\end{equation}

Since $y\leq x$, we have
\[
y^{\frac{1}{2}}
\leq
\frac{x^{\frac{n-2}{2}}}{y^{\frac{n-3}{2}}}.
\]

Hence the right-hand side of (\ref{ExpsumIR06}) is absorbed by the
right-hand side of (\ref{ExpsumIR07}). It now follows from
(\ref{ExpsumIR05})--(\ref{ExpsumIR07}) that
\[
T\left(\frac{a}{q}+\lambda;x,y\right)
\ll
q^{1-\frac{1}{n}}\ln q
+
q^{1-\frac{1}{n}}
\ln\left(2+|\lambda|x^{n-1}\right)
+
q^{\frac{1}{2}-\frac{1}{n}}
x^{\frac{n-2}{2}}y^{\frac{3-n}{2}}.
\]
This completes the proof of Theorem~\ref{ShortExposumIR}.

\section{\bf Proof of Theorem \ref{TeorAsForWaringPPSl}}
We begin with the circle-method decomposition used in
\cite{Rakh-2024} and retain the notation required below.
We include the details to indicate precisely where the new
estimate enters.

Arranging the coefficients \(\mu_1,\ldots,\mu_r\) in nondecreasing
order and relabelling the corresponding variables accordingly, we may
assume that
\[
\mu_1\leq\cdots\leq\mu_r.
\]
By the hypothesis of the theorem,
\[
\theta(n,r)
=
\frac{2}{n((r-1)(n-1)+2)},
\qquad
H\geq N^{1-\theta(n,r)+\varepsilon}.
\]
Using the notations
\begin{align*}
&N_k=(\mu_kN+H)^\frac{1}{n}, \qquad H_k=(\mu_kN+H)^\frac{1}{n}-(\mu_kN-H)^\frac{1}{n},\quad Q=\frac{H_r}{\lnc},
\quad\tau=2(n-1)nN_1^{n-2}H_1,
\end{align*}
we express the number of solutions of the Diophantine equation (\ref{Formula-x1n+...+xrn=N}) under the conditions (\ref{formula |xin-N/n|<=H}) as
\begin{align*}
J_{n,r}(N,H)&=\int\limits_{-\frac{1}{\tau}}^{1-\frac{1}{\tau}}e(-\alpha N)\prod_{k=1}^r\sum_{|m^n-\mu_kN|\le H}e(\alpha m^n)d\alpha= \int\limits_{-\frac{1}{\tau}}^{1-\frac{1}{\tau}}e(-\alpha N)\prod_{k=1}^r\left(T(\alpha;N_k,H_k)+\theta_k(\alpha)\right)d\alpha,
\end{align*}
where
\[
\theta_k(\alpha)=
\begin{cases}
e\bigl(\alpha(N_k-H_k)^n\bigr),
& \text{if } N_k-H_k\in\mathbb Z,\\
0,
& \text{otherwise}.
\end{cases}
\]
In particular,
\[
|\theta_k(\alpha)|\leq1.
\]
The upper endpoint \(N_k\) and the length \(H_k\) of the interval of
summation in \(T(\alpha;N_k,H_k)\) are expressed in terms of \(N\) and
\(H\) by the following asymptotic formulas:
\begin{align*}
N_k&=\mu_{k}^{\frac{1}{n}}N^{\frac{1}{n}}\left(1+\frac{H}{\mu_kN}\right)^{\frac{1}{n}}=\mu_{k}^{\frac{1}{n}}N^{\frac{1}{n}}\left(1+O\left(\frac{H}{N}\right)\right),\\
H_k&=\mu_{k}^{\frac{1}{n}}N^{\frac{1}{n}}\left(\left(1+\frac{H}{\mu_kN}\right)^{\frac{1}{n}}-\left(1-\frac{H}{\mu_kN}\right)^{\frac{1}{n}}\right)=
\frac{2H}{n\mu_k^{1-\frac{1}{n}}N^{1-\frac{1}{n}}}\left(1+O\left(\frac{H^2}{N^2}\right)\right). 
\end{align*}

For $\nu=1,2,\ldots,r$ and $1\le i_1<\ldots<i_\nu\le r$, we introduce the notation
$$
\mathscr{D}_\nu=\mathscr{D}(i_1,\ldots,i_\nu)=\{1,2,\ldots,r\}\setminus\{i_1,\ldots,i_\nu\}
$$
and use the identity
\begin{align*}
\prod_{k=1}^r\left(T(\alpha;N_k,H_k)+\theta_k(\alpha)\right)=&\prod_{k=1}^rT(\alpha;N_k,H_k)\\
&+\sum_{\nu=1}^{r-1}\sum_{1\le i_1<\ldots<i_\nu\le r}\prod_{j=1}^\nu T(\alpha;N_{i_j},H_{i_j})
\prod_{k\in\mathscr{D}_\nu} \theta_k(\alpha)+\prod_{k=1}^r\theta_k(\alpha).
\end{align*}
The contribution of the last term in this identity is zero. Indeed, it
vanishes unless \(N_k-H_k\in\mathbb Z\) for every \(k\). If this condition
holds, then
\[
\sum_{k=1}^r(N_k-H_k)^n
=
\sum_{k=1}^r(\mu_kN-H)
=
N-rH.
\]
Thus, \(rH\) is a positive integer, and hence
\[
\int_{-\frac{1}{\tau}}^{1-\frac{1}{\tau}}
e(-\alpha N)\prod_{k=1}^r\theta_k(\alpha)\,d\alpha
=
\int_{-\frac{1}{\tau}}^{1-\frac{1}{\tau}}
e(-\alpha rH)\,d\alpha
=
0.
\]
Therefore, we represent \(J_{n,r}(N,H)\) as
\begin{align}
&J_{n,r}(N,H)=\int\limits_{-\frac{1}{\tau}}^{1-\frac{1}{\tau}}e(-\alpha N)\prod_{k=1}^rT(\alpha;N_k,H_k)d\alpha+R_{n,r}(N,H),\label{formula J_{n,r}(N,H)=int...+R1(n,H)}\\
&R_{n,r}(N,H)
=\sum_{\nu=1}^{r-1}
\sum_{1\leq i_1<\cdots<i_\nu\leq r}
\int_{-\frac{1}{\tau}}^{1-\frac{1}{\tau}}
e(-\alpha N)
\prod_{j=1}^{\nu}T(\alpha;N_{i_j},H_{i_j})
\prod_{k\in\mathscr D_\nu}\theta_k(\alpha)\,d\alpha. \nonumber
\end{align}
For the remainder \(R_{n,r}(N,H)\), we use the mean-value
estimate for short Weyl sums established in \cite{Rakh-2024}.
Since \(r-1=2^n\), the same Hölder argument as in \cite{Rakh-2024}
gives
\begin{align*}
&R_{n,r}(N,H)\ll \left(\frac{H}{N^{1-\frac{1}{n}}}\right)^{r-1-n+\varepsilon}=\frac{H^{r-1}}{N^{r-\frac{r}{n}}}\cdot \frac{\Big(N^{1-\frac{1}{n}}\Big)^{n+1-\varepsilon}}{H^{n-\varepsilon}}\\
&\ll
\frac{H^{r-1}}{N^{r-\frac{r}{n}}\lnc^{r-1}}\cdot \frac{N^{\frac{n^2-1}{n}-\frac{n-1}{n}\varepsilon}}{\Big(N^{1-\theta(n,r)+\varepsilon}\Big)^{n-\varepsilon}}\lnc^{r-1}=\frac{H^{r-1}}{N^{r-\frac{r}{n}}\lnc^{r-1}}\cdot N^{-(\frac{1}{n}-n\theta(n,r))-(n-\frac{1}{n}+\theta(n,r))\varepsilon+\varepsilon^2}\lnc^{r-1}.
\end{align*}
Since
$$
\frac{1}{n}-n\theta(n,r)=\frac{(r-3)(n-1)}{n((r-1)(n-1)+2)}>0,
$$
for every sufficiently small fixed \(\varepsilon>0\), we have
$$
-(\frac{1}{n}-n\theta(n,r))-(n-\frac{1}{n}+\theta(n,r))\varepsilon+\varepsilon^2<0.
$$
Consequently,
\begin{align*}
R_{n,r}(N,H)& \ll\frac{H^{r-1}}{N^{r-\frac{r}{n}}\lnc^{r-1}}.
\end{align*}
Combining this estimate with \eqref{formula J_{n,r}(N,H)=int...+R1(n,H)},
we obtain
\begin{align}
J_{n,r}(N,H)&=\int\limits_{-\frac{1}{\tau}}^{1-\frac{1}{\tau}}e(-\alpha N)\prod_{k=1}^rT(\alpha;N_k,H_k)d\alpha +O\left(\frac{H^{r-1}}{N^{r-\frac{r}{n}}\lnc^{r-1}}\right).\label{formula J_{n,r}(N,H)=int...+O(H(r-r/n+varisilon))}
\end{align}
Applying Lemma~\ref{lemmaDirichle} with \(P=\tau\), for every
\[
\alpha\in\left[-\frac{1}{\tau},1-\frac{1}{\tau}\right]
\]
there exist coprime integers \(a\) and \(q\) such that
\begin{equation*}
\alpha=\frac{a}{q}+\lambda,
\qquad
1\leq q\leq\tau,
\qquad
|\lambda|\leq\frac{1}{q\tau}.
\end{equation*}

We may choose \(a\) so that \(0\leq a\leq q-1\). Since
\((a,q)=1\), the case \(a=0\) occurs only when \(q=1\).
For \(1\leq q\leq Q\) and \(0\leq a\leq q-1\), with
\((a,q)=1\), put
\[
\mathfrak M(a,q)
=
\left\{
\alpha\in
\left[-\frac{1}{\tau},1-\frac{1}{\tau}\right]:
\left|\alpha-\frac{a}{q}\right|
\leq\frac{1}{q\tau}
\right\}.
\]
Define
\[
\mathfrak M
=
\bigcup_{1\leq q\leq Q}
\ \bigcup_{\substack{0\leq a\leq q-1\\(a,q)=1}}
\mathfrak M(a,q),
\qquad
\mathfrak m
=
\left[-\frac{1}{\tau},1-\frac{1}{\tau}\right]
\setminus\mathfrak M.
\]
The intervals \(\mathfrak M(a,q)\) are pairwise disjoint. We further
decompose \(\mathfrak M\) into the sets \(\mathfrak M_1\) and
\(\mathfrak M_2\):
\begin{align*}
&\mathfrak{M}_1=\bigcup_{1\le q\le Q}\bigcup_{\substack{a=0\\ (a,q)=1}}^{q-1}\mathfrak{M}_1(a,q),\quad
\mathfrak{M}_1(a,q)=\left\{
\alpha\in
\left[-\frac{1}{\tau},1-\frac{1}{\tau}\right]:
\left|\alpha-\frac{a}{q}\right|\leq\eta_q
\right\},\\ &\mathfrak{M}_2=\mathfrak{M}\setminus\mathfrak{M}_1,\qquad \eta_q=\frac{1}{2nqN_r^{n-1}}.
\end{align*}
Denoting by $J(\mathfrak{M}_1)$, $J(\mathfrak{M}_2)$, and $J(\mathfrak{m})$ the integrals over the sets $\mathfrak{M}_1$, $\mathfrak{M}_2$, and $\mathfrak{m}$, respectively, taking into account (\ref{formula J_{n,r}(N,H)=int...+O(H(r-r/n+varisilon))}), we obtain
\begin{equation}\label{formula J_{n,r}(N,H)=I(M1)+I(M2)+I(m)+O(..)}
J_{n,r}(N,H)=J(\mathfrak{M}_1)+J(\mathfrak{M}_2)+J(\mathfrak{m})+O\left(\frac{H^{r-1}}{N^{r-\frac{r}{n}}\lnc^{r-1}}\right).
\end{equation}
The integral over \(\mathfrak M_1\) supplies the main term,
whereas the integrals over \(\mathfrak M_2\) and \(\mathfrak m\)
are absorbed into the error term.

\subsection{The integral $J(\mathfrak M_1)$}

In \cite{Rakh-2024}, an asymptotic formula for the integral over
$\mathfrak{M}_1$ was obtained. The argument given there remains valid
for the present value of $\theta(n,r)$.
Indeed,
\[
\theta(n,r)<\frac{1}{n},
\]
and, since $r=2^n+1$, all the remaining numerical conditions used in
that proof are satisfied for every $n\geq3$. Consequently,
\begin{equation}\label{formula I(M1)-3}
J(\mathfrak M_1)
=
\frac{2^r\gamma(n,r)}{n^r}
\prod_{k=1}^{r}\mu_k^{-1+\frac{1}{n}}\,
\mathfrak S(N)
\frac{H^{r-1}}{N^{\,r-\frac{r}{n}}}
+
O\left(
\frac{H^{r-1}}
     {N^{r-\frac{r}{n}}\lnc^{r-1}}
\right).
\end{equation}

\subsection{The integral $J(\mathfrak M_2)$}

To estimate the integral $J(\mathfrak M_2)$, we use the estimates
obtained in \cite{Rakh-2024}. In that paper, two cases are considered
according to the size of $|\lambda|$. We show that the corresponding
estimates remain valid for our choice of $H$. 

For a given \(1\leq\nu\leq r\), the two cases are as follows.
\medskip
\noindent

\textbf{Case 1.} Suppose that
\[
\eta_q<|\lambda|
\leq
\frac{1}{2nqN_\nu^{\,n-1}}.
\]
In this case, \cite{Rakh-2024} gives the estimate
\[
J(\mathfrak M_2)
\ll
\frac{H^{r-1}}
     {N^{\,r-\frac{r}{n}}\lnc^{\,1-\frac{1}{n}}}
\left(
\frac{N^{1-\delta_1+\eta_1(n)}}{H}
\right)^{d_1},
\]
where
\[
\delta_1=\frac{1}{n^3-n^2+n},
\qquad
d_1=n-1+\frac{1}{n}-\varepsilon,
\]
and
\[
\eta_1(n)
=
\frac{(n^2-n)\varepsilon}
     {(n^2-n+1)(n^2-n+1-n\varepsilon)}
\leq
\frac{\varepsilon}{n^2-n+1}
<
\frac{\varepsilon}{2}.
\]

For $r=2^n+1$, we have
\[
\theta(n,r)
=
\frac{2}{n\bigl(2^n(n-1)+2\bigr)}.
\]
The inequality
\[
\theta(n,r)\leq\delta_1
\]
is equivalent to
\[
2n\leq2^n,
\]
which holds for every $n\geq3$. Therefore,
\[
1-\delta_1+\eta_1(n)
\leq
1-\theta(n,r)+\frac{\varepsilon}{2}.
\]
Taking into account the condition
\[
H\geq N^{1-\theta(n,r)+\varepsilon},
\]
we obtain
\[
\frac{N^{1-\delta_1+\eta_1(n)}}{H}
\leq
N^{-\frac{\varepsilon}{2}}.
\]
Consequently,
\begin{equation*}
  J(\mathfrak M_2)
\ll
\frac{H^{r-1}}
     {N^{\,r-\frac{r}{n}}\lnc^{\,1-\frac{1}{n}}}
N^{-\frac{\varepsilon d_1}{2}}
\ll
\frac{H^{r-1}}
     {N^{\,r-\frac{r}{n}}\lnc^{\,r-1}}.
\end{equation*}

\medskip
\noindent
\textbf{Case 2.} Suppose that
\[
\frac{1}{2nqN_\nu^{\,n-1}}
<
|\lambda|
\leq
\frac{1}{q\tau}.
\]
In this case, \cite{Rakh-2024} gives the estimate
\[
J(\mathfrak M_2)
\ll
\frac{H^{r-1}}
     {N^{\,r-\frac{r}{n}}
      \lnc^{\,\frac{1}{2}-\frac{1}{n}}}
\left(
\frac{N^{1-\delta_2+\eta_2(n)}}{H}
\right)^{d_2},
\]
where
\[
\delta_2=
\frac{1}{n^3-\frac{1}{2}n^2+n},
\qquad
d_2=n-\frac{1}{2}+\frac{1}{n}-\varepsilon,
\]
and
\[
\eta_2(n)
=
\frac{\left(n^2-\frac{1}{2}n\right)\varepsilon}
     {\left(n^2-\frac{1}{2}n+1\right)
      \left(n^2-\frac{1}{2}n+1-n\varepsilon\right)}
\leq
\frac{\varepsilon}{n^2-n+1}
<
\frac{\varepsilon}{2}.
\]

For $r=2^n+1$, the inequality
\[
\theta(n,r)\leq\delta_2
\]
is equivalent to
\[
2n^2-n\leq2^n(n-1),
\]
which also holds for every $n\geq3$. Hence,
\[
1-\delta_2+\eta_2(n)
\leq
1-\theta(n,r)+\frac{\varepsilon}{2},
\]
and therefore
\[
\frac{N^{1-\delta_2+\eta_2(n)}}{H}
\leq
N^{-\frac{\varepsilon}{2}}.
\]
It follows that
\[
J(\mathfrak M_2)
\ll
\frac{H^{r-1}}
     {N^{\,r-\frac{r}{n}}
      \lnc^{\,\frac{1}{2}-\frac{1}{n}}}
N^{-\frac{\varepsilon d_2}{2}}
\ll
\frac{H^{r-1}}
     {N^{\,r-\frac{r}{n}}\lnc^{\,r-1}}.
\]

Thus, in either case, we obtain
\begin{equation}\label{formula I(M2)-4}
J(\mathfrak M_2)
\ll
\frac{H^{r-1}}
     {N^{\,r-\frac{r}{n}}\lnc^{\,r-1}}.
\end{equation}

\subsection{Estimation of the integral $J(\mathfrak{m})$}
Put
\[
\delta=\frac{2}{r-1}.
\]
The argument used in \cite{Rakh-2024} for estimating
$J(\mathfrak M_2)$ yields, for any fixed $1\leq \nu\leq r$,
\begin{equation}
\label{J-m-general}
J(\mathfrak{m})
\ll
\frac{H^{r-1}}{N^{r-\frac{r}{n}}}
\frac{
N^{n-\frac{1}{n}-\frac{n-1}{n}\varepsilon}
}{
H^{n-\varepsilon}
}
\max_{\alpha\in\mathfrak{m}}
\left|T(\alpha;N_\nu,H_\nu)\right|,
\end{equation}
where $\varepsilon>0$ is an arbitrarily small auxiliary constant.

For every $\alpha\in\mathfrak m$, we apply Lemma \ref{lemmaDirichle} once more, now with the parameter
\[
P=H_\nu^{n-1}.
\]
Then there exist coprime integers $a_1,q_1$ such that
\[
\alpha=\frac{a_1}{q_1}+\beta,
\qquad
1\leq q_1\leq H_\nu^{n-1},
\qquad
|\beta|\leq\frac{1}{q_1H_\nu^{n-1}}.
\]
We divide $\mathfrak m$ into two parts:
\[
\mathfrak m=\mathfrak m_1\cup\mathfrak m_2,
\]
where
\[
\mathfrak m_1
=
\left\{
\alpha\in\mathfrak m:
q_1>Q
\right\},
\qquad
\mathfrak m_2
=
\left\{
\alpha\in\mathfrak m:
q_1\leq Q
\right\}.
\]
Accordingly,
\begin{equation}
\label{J-m-split}
J(\mathfrak m)=J(\mathfrak m_1)+J(\mathfrak m_2).
\end{equation}

\medskip
\noindent
\textbf{Estimation of $J(\mathfrak m_1)$.}
If $\alpha\in\mathfrak m_1$, then
\[
q_1>Q,
\qquad
q_1\leq H_\nu^{n-1}.
\]
Moreover,
\[
|\beta|
\leq
\frac{1}{q_1H_\nu^{n-1}}
\leq
\frac{1}{q_1^2}.
\]

Since $H_r\asymp H_\nu$, using the Weyl-type estimate stated in Lemma \ref{Lemma-Weyl-3}, we obtain
\begin{align}
\left|T(\alpha;N_\nu,H_\nu)\right|
&\ll
H_\nu^{1+\varepsilon}
\left(
\frac{1}{H_\nu}
+\frac{1}{q_1}
+\frac{q_1}{H_\nu^n}
\right)^\delta
\ll
H_\nu^{1-\delta+\varepsilon}
\lnc^\delta.
\label{T-m1}
\end{align}
Using
\[
H_\nu\asymp\frac{H}{N^{1-\frac{1}{n}}},
\]
and substituting \eqref{T-m1} into \eqref{J-m-general}, we obtain
\begin{equation}
\label{J-m1-factor}
J(\mathfrak m_1)
\ll
\frac{H^{r-1}}{N^{r-\frac{r}{n}}}
\lnc^\delta
\frac{N^{E_0}}{H^{D_0}}\cdot \frac{H^{2\varepsilon}}{N^{\frac{2(n-1)}{n}\varepsilon}}
\ll
\frac{H^{r-1}}{N^{r-\frac{r}{n}}}
\lnc^\delta
\left(
\frac{N^{\frac{E_0}{D_0}}}{H}
\right)^{D_0}\cdot \frac{H^{2\varepsilon}}{N^{\frac{2(n-1)}{n}\varepsilon}},
\end{equation}
where
\[
D_0=n-1+\delta
\]
and
\[
E_0
=
n-1+\frac{n-1}{n}\delta.
\]
By the definition
\[
\theta(n,r)
=
\frac{\delta}{n(n-1+\delta)}
=
\frac{2}
{n\bigl((r-1)(n-1)+2\bigr)},
\]
we have
\[
\frac{E_0}{D_0}
=\frac{n-1+\frac{n-1}{n}\delta}{n-1+\delta}=
1-\theta(n,r).
\]

Then, using $H\geq N^{1-\theta(n,r)+\varepsilon}$ and $H<N$,
we obtain from \eqref{J-m1-factor}
\begin{align}
\label{J-m1-final}
J(\mathfrak m_1)
\ll\frac{H^{r-1}}{N^{r-\frac{r}{n}}}
\lnc^\delta
\left(
\frac{N^{1-\theta(n,r)}}{N^{1-\theta(n,r)+\varepsilon}}
\right)^{n-1+\delta}\cdot \frac{N^{2\varepsilon}}{N^{\frac{2(n-1)}{n}\varepsilon}}&\ll\frac{H^{r-1}}{N^{r-\frac{r}{n}}}
\lnc^\delta N^{-\varepsilon(n-1-\frac{2}{n}+\delta)}\nonumber\\
&\ll
\frac{H^{r-1}}
{N^{r-\frac{r}{n}}\lnc^{r-1}}.
\end{align}

\medskip
\noindent
\textbf{Estimation of $J(\mathfrak m_2)$.}
Let $\alpha\in\mathfrak m_2$. Then
\[
q_1\leq Q.
\]
Since $\alpha\notin\mathfrak M$, the definition of the major arcs implies
\[
|\beta|>\frac{1}{q_1\tau}.
\]
Moreover,
\[
\tau\asymp N_\nu^{n-2}H_\nu.
\]
Consequently,
\[
\frac{1}{q_1N_\nu^{n-2}H_\nu}
\ll
|\beta|
\leq
\frac{1}{q_1H_\nu^{n-1}}.
\]
Thus, Theorem~\ref{ShortExposumIR} is applicable with
\[
x=N_\nu,
\qquad
y=H_\nu,
\qquad
q=q_1,
\qquad
\lambda=\beta.
\]
It follows that
\begin{align}
\left|T(\alpha;N_\nu,H_\nu)\right|
\ll{}&
q_1^{1-\frac{1}{n}}\ln q_1
+
q_1^{1-\frac{1}{n}}
\ln\left(
2+|\beta|N_\nu^{n-1}
\right)
\nonumber\\
&+
q_1^{\frac{1}{2}-\frac{1}{n}}
N_\nu^{\frac{n-2}{2}}
H_\nu^{\frac{3-n}{2}}.
\label{T-m2-theorem}
\end{align}
Both logarithms in \eqref{T-m2-theorem} are $O(\lnc)$.
Furthermore,
\[
q_1\leq Q
\ll\frac{H_\nu}{\lnc}.
\]
Therefore,
\begin{align}
\left|T(\alpha;N_\nu,H_\nu)\right|
& \ll
H_\nu^{1-\frac{1}{n}}
\lnc^{\frac{1}{n}}+
N_\nu^{\frac{n-2}{2}}
H_\nu^{2-\frac{n}{2}-\frac{1}{n}}
\lnc^{-\frac{1}{2}+\frac{1}{n}}\nonumber\\
&\ll \left(\frac{H}{N^{1-\frac{1}{n}}}\right)^{1-\frac{1}{n}}
\lnc^{\frac{1}{n}}+
N^{\frac{n-2}{2n}}
\left(\frac{H}{N^{1-\frac{1}{n}}}\right)^{2-\frac{n}{2}-\frac{1}{n}}
\lnc^{-\frac{1}{2}+\frac{1}{n}}.
\label{T-m2-final}
\end{align}

For the first term on the right-hand side of \eqref{T-m2-final},
put
\[
D_1=n-1+\frac{1}{n},
\qquad
v_1(n)
=
\frac{1}{n(n^2-n+1)}.
\]

For the second term, put
\[
D_2
=
\frac{3n^2-4n+2}{2n},
\qquad
v_2(n)
=
\frac{2}{n(3n^2-4n+2)}.
\]

Substituting \eqref{T-m2-final} into \eqref{J-m-general}, and using again $N^{1-\theta(n,r)+\varepsilon}\le H< N$, we obtain
\begin{align}
J(\mathfrak m_2)
&\ll
\frac{H^{r-1}}{N^{r-\frac{r}{n}}}
\Bigg\{
\lnc^{\frac{1}{n}}
\left(
\frac{
N^{1-v_1(n)}
}{H}
\right)^{D_1}
+
\lnc^{-\frac{1}{2}+\frac{1}{n}}
\left(
\frac{
N^{1-v_2(n)}
}{H}
\right)^{D_2}
\Bigg\}\cdot \frac{H^{\varepsilon}}{N^{\frac{n-1}{n}\varepsilon}}\nonumber\\
&\ll\frac{H^{r-1}}{N^{r-\frac{r}{n}}}
\Bigg\{
\lnc^{\frac{1}{n}}
\left(
\frac{
N^{1-v_1(n)}
}{N^{1-\theta(n,r)+\varepsilon}}
\right)^{D_1}
+
\lnc^{-\frac{1}{2}+\frac{1}{n}}
\left(
\frac{
N^{1-v_2(n)}
}{N^{1-\theta(n,r)+\varepsilon}}
\right)^{D_2}
\Bigg\}\cdot \frac{N^{\varepsilon}}{N^{\frac{n-1}{n}\varepsilon}}\nonumber\\
&\ll  \frac{H^{r-1}}{N^{r-\frac{r}{n}}}
\Bigg\{
\lnc^{\frac{1}{n}}
\left(
N^{\theta(n,r)-v_1(n)}
N^{-\varepsilon}
\right)^{n-1+\frac{1}{n}}
+
\lnc^{-\frac{1}{2}+\frac{1}{n}}
\left(
N^{\theta(n,r)-v_2(n)}
N^{-\varepsilon}
\right)^{\frac{3n-4}{2}+\frac{1}{n}}
\Bigg\}\cdot N^{\frac{1}{n}\varepsilon}\nonumber\\
&\ll \frac{H^{r-1}}{N^{r-\frac{r}{n}}}
\Bigg\{
\left(
N^{\theta(n,r)-v_1(n)}
N^{-\varepsilon}
\right)^{n-1}
+
\left(
N^{\theta(n,r)-v_2(n)}
N^{-\varepsilon}
\right)^{\frac{3n-4}{2}}
\Bigg\}\lnc^{\frac{1}{n}}
\label{J-m2-factor}
\end{align}

Since $r-1=2^n$, the inequality
\[
\theta(n,r)\leq v_1(n)
\]
is equivalent to
\[
2n\leq2^n,
\]
whereas
\[
\theta(n,r)\leq v_2(n)
\]
is equivalent to
\[
n(3n-4)\leq2^n(n-1).
\]
Both inequalities hold for every $n\geq3$.
Consequently, it follows from \eqref{J-m2-factor} that
\begin{equation}
\label{J-m2-final}
J(\mathfrak m_2)\ll \frac{H^{r-1}}{N^{r-\frac{r}{n}}}\Big(N^{-\varepsilon(n-1)}+N^{-\frac{3n-4}{2}\varepsilon}\Big)\lnc^{\frac{1}{n}}
\ll
\frac{H^{r-1}}
{N^{r-\frac{r}{n}}\lnc^{r-1}}.
\end{equation}

Finally, combining \eqref{J-m-split}, \eqref{J-m1-final}, and
\eqref{J-m2-final}, we obtain
\begin{equation}
\label{J-m-final}
J(\mathfrak m)
\ll
\frac{H^{r-1}}
{N^{r-\frac{r}{n}}\lnc^{r-1}}.
\end{equation}

Combining estimates for $J(\mathfrak{M}_1)$, $J(\mathfrak{M}_2)$, and $J(\mathfrak{m})$ respectively from \eqref{formula I(M1)-3}, \eqref{formula I(M2)-4},
and \eqref{J-m-final} with
\eqref{formula J_{n,r}(N,H)=I(M1)+I(M2)+I(m)+O(..)},
we obtain the statement of
Theorem~\ref{TeorAsForWaringPPSl}.

\end{document}